\newtheorem{theorem}{Theorem}[section]
\newtheorem{Lemma}[theorem]{Lemma}
\newtheorem{Definition}[theorem]{Definition}
\newcommand{\F}{\mathbb F}
\DeclareMathOperator{\Proj}{Proj}
\begin{document}


\title[On $k$-PN Functions over Finite Fields]{On $k$-PN Functions over Finite Fields}
\author{Marco Pizzato}

\thanks{The author would like to thank his PhD advisor, Sandro Mattarei.}

\address{Dipartimento di Matematica, Universit\`a di Trento, I-38123
Povo (TN)} 
\email{marco.pizzato1@gmail.com}

\subjclass{}

\begin{abstract} 
Starting from PN functions, we introduce the concept of $k$-PN functions and classify $k$-PN monomials over finite fields of order $p,p^2$ and $p^4$ for \textit{small} values of $k$.
\end{abstract}

\maketitle

\section{Introduction}
In this article we present some results concerning $k$-PN functions, which we define as following.

\begin{Definition}
Let $f$ be a polynomial over $\F_q$. We define the $k$-th finite difference in \textit{directions} $(a_1,\dots,a_k)$ as: 
$$\nabla_{a_1,\dots,a_k}^k f = g(x+a_k)-g(x),$$ where $g(x) = \nabla_{a_1,\dots,a_{k-1}}^{k-1} f$ and $\nabla_{a}^1 f = f(x+a)-f(x)$. Now we say that a polynomial $f \in \F_q[x]$ is $k$-PN over $\F_q$ if the function associated to $\nabla_{a_1,\dots,a_k}^k f$ is a bijection for every choice of $(a_1,\dots,a_k)$, with $a_i \in \F_q, a_i \neq 0$ for all $i$.
\end{Definition}

We note that in the case $k = 1$ we obtain PN functions, well known and studied. We will focus our attention on $k$-PN monomials over \textit{small} finite fields, generalizing some results obtained for PN monomials. In \cite{MR1030384}, \cite{MR914241}, \cite{MR1008158} and \cite{MR1045927} we have the first important results, showing that essentially only quadratic polynomials are PN functions over prime fields of odd characteristic. In \cite{MR2231922}, the author shows that, up to $p$-th powers, the only PN monomial over the field $\F_{p^2}$ is $x^2$. In  \cite{MR2890555}, the authors shows that the previous result holds also for $\F_{p^4}$, provided that $p \geq 5$.
\medskip

We present now our main results, generalizing the previous ones.

\begin{theorem}\label{teo1}
Suppose $x^n$ is $k$-PN over $\F_{p}$, where $n \leq p-1$ and $p \geq k+2$. Then $n = k+1$.
\end{theorem}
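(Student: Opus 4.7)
The plan is to invoke the classical theorem (cited in the introduction; see \cite{MR1030384,MR914241,MR1008158,MR1045927}) that every PN function over $\F_p$ with $p$ odd is a quadratic polynomial, and to apply it to the $(k-1)$-th finite difference of $f=x^n$. The key point is a recursive reading of the $k$-PN condition: if $f$ is $k$-PN and one fixes any nonzero $a_1,\dots,a_{k-1}\in\F_p$, then the polynomial $g_{\mathbf{a}}(x):=\nabla^{k-1}_{a_1,\dots,a_{k-1}} f$ satisfies that $g_{\mathbf{a}}(x+a_k)-g_{\mathbf{a}}(x)$ is a bijection for every nonzero $a_k$, which is precisely the statement that $g_{\mathbf{a}}$ is itself PN over $\F_p$.

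Next I would compute $g_{\mathbf{a}}$ for $f=x^n$ and read off its degree. Expanding binomially and using the standard identity
$$\sum_{S\subseteq\{1,\dots,k-1\}}(-1)^{k-1-|S|}\Bigl(\sum_{i\in S}a_i\Bigr)^{j}=\begin{cases}0 & \text{if } j<k-1,\\ (k-1)!\,a_1\cdots a_{k-1} & \text{if } j=k-1,\end{cases}$$
one obtains $g_{\mathbf{a}}(x)=\binom{n}{k-1}(k-1)!\,a_1\cdots a_{k-1}\,x^{n-k+1}+(\text{lower-order terms in }x)$. The leading coefficient is $n(n-1)\cdots(n-k+2)\cdot a_1\cdots a_{k-1}$; under $n\leq p-1$ and $p\geq k+2$ its integer factors all lie in $\{1,\dots,p-1\}$, and each $a_i$ is nonzero, so the coefficient is nonzero modulo $p$. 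Hence $\deg g_{\mathbf{a}}=n-k+1$ exactly.

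Applying the classical PN theorem to $g_{\mathbf{a}}$ then forces its degree to equal $2$; the inequality $n-k+1\leq p-k<p$ ensures $g_{\mathbf{a}}$ is already represented by a polynomial of degree $<p$, so no reduction modulo $x^p-x$ is needed and the classical statement applies directly. This gives $n-k+1=2$, i.e.\ $n=k+1$. (The values $n<k+1$ are already excluded, since $\nabla^{k}_{\mathbf{a}}x^n$ is then identically zero for $n<k$ or the nonzero constant $k!\,a_1\cdots a_k$ for $n=k$, and neither is a bijection of $\F_p$.) The hard part, if any, is not the calculation here but invoking the classical PN result in the required generality: one needs it for \emph{arbitrary} PN polynomials over $\F_p$, not merely monomials, which is exactly what the cited references supply.
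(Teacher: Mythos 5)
Your proposal is correct and follows essentially the same route as the paper: reduce to the classification of PN polynomials over $\F_p$ (Gluck et al.), observe that the $(k-1)$-th finite difference of a $k$-PN function is itself PN and hence quadratic, and track the degree drop under differencing. You are in fact slightly more careful than the paper, which asserts that the degree drops by exactly one each time without spelling out that the leading coefficient $n(n-1)\cdots(n-k+2)\,a_1\cdots a_{k-1}$ is nonzero modulo $p$ because $n\leq p-1$; your explicit verification of this, and of the degenerate cases $n\leq k$, fills in exactly the details the paper leaves implicit.
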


\begin{theorem}\label{teo2}
Suppose $x^n$ is a $k$-PN monomial over $\F_{p^2}$ with $p \geq 2k + 2$ and $n \leq p^2-1$. Then, writing $n = a + bp$, we have $a + b = k+1$.
\end{theorem}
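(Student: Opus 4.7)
The plan is to work bivariately. Writing $n = a + bp$ with $0 \le a, b \le p-1$, I view $x^n$ as the restriction to the Frobenius diagonal $y = x^p$ of the bivariate monomial $x^a y^b \in \F_{p^2}[x,y]$. Introducing the commuting difference operators $\Delta_t F(x,y) := F(x+t,\, y+t^p) - F(x,y)$ and $\Delta^k := \Delta_{t_1}\cdots\Delta_{t_k}$, one has the identity
\[
\nabla^k_{t_1,\dots,t_k}(x^n)(x) \;=\; \bigl(\Delta^k(x^a y^b)\bigr)\big|_{y=x^p}.
\]
A direct expansion gives $\Delta^k(x^a y^b) = \sum_{u,v}\binom{a}{u}\binom{b}{v}\,P_{(a-u)+p(b-v)}(t)\,x^u y^v$, where $P_j(t) := \sum_{S\subseteq [k]}(-1)^{k-|S|}\bigl(\sum_{i\in S} t_i\bigr)^j$. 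The multinomial expansion of $P_j$, combined with Lucas's theorem, shows that $P_j$ vanishes identically in $\F_p[t_1,\dots,t_k]$ precisely when the base-$p$ digit sum of $j$ is strictly less than $k$. Since $0 \le a-u,\,b-v \le p-1$, the digit sum of $(a-u)+p(b-v)$ is exactly $(a-u)+(b-v)$, so the monomial $x^u y^v$ has nonzero coefficient iff $u\le a$, $v\le b$, and $u+v \le r := a+b-k$. Thus the joint $(x,y)$-degree of $\Delta^k(x^a y^b)$ is exactly $r$.

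The lower bound $a+b \ge k+1$ is then immediate: if $a+b<k$ the polynomial is identically zero, and if $a+b=k$ it reduces to the nonzero constant $P_n(t)$, so $\nabla^k(x^n)$ cannot permute $\F_{p^2}$. For the upper bound, I assume $a+b \ge k+2$ (so $r \ge 2$) and seek a contradiction. The symmetric case $a=b$ is easy: then $n=a(p+1)$, so $x^n=N(x)^a$ where $N(x)=x^{p+1}$ is the $\F_{p^2}/\F_p$ norm. Hence $(x+t_S)^n\in\F_p$ for every $t_S$, and $\nabla^k(x^n)(x)\in\F_p$ identically---its image lies in the proper subfield $\F_p \subsetneq \F_{p^2}$ and cannot be bijective.

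For $a\ne b$ I use the equivalent reformulation that $x^n$ is $k$-PN iff $\nabla^{k+1}(x^n)(x) \ne 0$ for every $x\in\F_{p^2}$ and every nonzero $(t_1,\dots,t_{k+1})$. When $r=2$, $\Delta^{k+1}(x^a y^b)$ has joint degree $\le 1$, so $\nabla^{k+1}(x^n)(x) = \alpha(t)\,x + \beta(t)\,x^p + \gamma(t)$ with $\alpha = a!\,b!\,e_{a-1}(t;t^p)$ and $\beta = a!\,b!\,e_{a}(t;t^p)$, where $e_i(t;t^p) := \sum_{|A|=i}\prod_{j\in A}t_j\prod_{j'\notin A}t_{j'}^{\,p}$. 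The Frobenius identity $e_i^{\,p}=e_{k+1-i}$ (from $t_j^{p^2}=t_j$) gives
\[
N\!\left(-\tfrac{\alpha}{\beta}\right) \;=\; \frac{e_{a-1}(t;t^p)\,e_b(t;t^p)}{e_a(t;t^p)\,e_{b-1}(t;t^p)}.
\]
Specializing $t_1=\cdots=t_{k+1}=1$ yields $\alpha = a!\,b!\,\binom{k+1}{a-1}$, $\beta = a!\,b!\,\binom{k+1}{a}$, both nonzero thanks to $p\ge 2k+2>k+1$, and the ratio collapses to $(a/b)^2$. This is $\ne 1$ in $\F_p$ provided $a\not\equiv\pm b\pmod p$; since $a+b=k+2<p$ (again from $p\ge 2k+2$) and $a\ne b$ by assumption, both congruences are excluded. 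Hence $x\mapsto \alpha x + \beta x^p$ is an $\F_p$-linear bijection of $\F_{p^2}$, so the equation $\alpha x+\beta x^p=-\gamma$ has a (unique) solution, producing a zero of $\nabla^{k+1}(x^n)$ and contradicting $k$-PN.

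The main obstacle is then extending this argument to $r\ge 3$ with $a\ne b$, since $\nabla^{k+1}(x^n)$ no longer has joint degree $\le 1$. My plan is to apply the same linear/norm analysis one level higher---to $\nabla^{k+r-1}(x^n)$, which again has joint degree $\le 1$---and combine the result with a careful back-propagation. At $t=(1,\dots,1)$ this still reduces the question to an identity among binomial coefficients of the form $\binom{k+r-1}{a-1}$ versus $\binom{k+r-1}{a}$, but when $a+b\ge p+1$ Lucas's theorem forces both to vanish mod $p$, so one must instead perturb to $t=(c,1,\dots,1)$ with $c\in\F_{p^2}^*\setminus\F_p^*$, exploiting that the polynomial $e_{a-1}e_b - e_a e_{b-1}$ in the rescaled variables $u_j=t_j^{1-p}$ is not identically zero whenever $a\ne b$ (a direct computation shows the leading coefficient along the line $u_2=\cdots=u_{K+1}=1$ is a nontrivial multiple of $(a-b)(u_1-1)^2$). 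The remaining delicate point is deducing non-$k$-PN-ness from non-injectivity at level $k+r-2$; I expect this to require an induction on $r$ that uses the joint-degree bound to trade one difference for the next, with the hypothesis $p\ge 2k+2$ supplying the necessary control over all the binomial coefficients that appear along the way.
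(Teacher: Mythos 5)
There is a genuine gap. Your bivariate setup, the digit-sum criterion for the vanishing of $P_j$, and the resulting joint-degree count are correct, and the cases you actually close ($a+b\le k$; $a=b$; and $a+b=k+2$ with $a\ne b$, where the norm computation $N(-\alpha/\beta)=(a/b)^2$ checks out) are handled by valid arguments. But the entire range $a+b\ge k+3$ with $a\ne b$ is left as a ``plan'', and that plan rests on a step that cannot work as stated: you propose to find a zero of $\nabla^{k+r-1}(x^n)$, where the joint degree has dropped back to $1$, and then ``back-propagate'' to non-$k$-PN-ness. Non-injectivity, or even identical vanishing, of differences of order strictly greater than $k+1$ is perfectly consistent with $x^n$ being $k$-PN: for the genuinely $k$-PN monomial $x^{k+1}$ one has $\nabla^{k+1}x^{k+1}$ equal to a nonzero constant and $\nabla^{k+2}x^{k+1}\equiv 0$. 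The only level at which exhibiting a zero certifies failure of $k$-PN-ness is $k+1$, and there $\nabla^{k+1}(x^n)$ has joint degree $r-1\ge 2$, so your linear/norm analysis no longer applies. Note also that the unhandled range contains the one case the paper must actually work to exclude, namely $a+b=p+k$ (so $r=p$).

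For comparison, the paper takes a shorter route: restricting all directions to $\F_p^*$ shows $\nabla^k(x^n)$ must permute $\F_p$, so Theorem \ref{teo1} forces $a+b\equiv k+1\pmod{p-1}$, leaving only $a+b=k+1$ or $a+b=p+k$; the latter is then eliminated by showing that $\bigl(\nabla^k_{1,\dots,1}x^n\bigr)^{1+p}\bmod (x^{p^2}-x)$ has degree $p^2-1$, violating the Hermite--Dickson criterion, the top coefficient being $(-1)^{k+1}\binom{a}{k+1}^2S(k,2k+2)$, which is nonzero by Lemma \ref{lemmakpn} and the hypothesis $p\ge 2k+2$. If you wish to keep your framework, the realistic repair is to import that reduction so that only $r=p$ survives beyond $r=1$, and then attack that single case with a Hermite--Dickson-type coefficient computation rather than a linear-algebra argument.
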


We can obtain a more complete result for $k = 2$ and $k=3$.

\begin{theorem}\label{teo3}
Suppose $x^n$ is $2$-PN over $\F_{p^2}, p \geq 5$, and $n \leq p^2-1$. Then $n\in\{3,3p\}$ if $p\equiv -1\pmod{3}$, and $n\in\{3,3p,1+2p,2+p\}$  if $p\equiv 1\pmod{3}$. If $x^n$ is $3$-PN over $\F_{p^2}$ and $n \leq p^2-1$, then $n \in \{4, 4p \}$. 
\end{theorem}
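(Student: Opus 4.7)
The plan is to combine Theorem \ref{teo2} with a Frobenius-equivalence reduction and an explicit calculation of the $k$-th finite difference for each remaining candidate. For $k=2$ with $p \geq 7$ and for $k = 3$ with $p \geq 8$, Theorem \ref{teo2} forces $a + b = k + 1$ when we write $n = a + bp$ with $0 \leq a, b \leq p-1$. Since $x \mapsto x^p$ is an automorphism of $\F_{p^2}$, the exponents $n$ and $pn \bmod (p^2 - 1)$ give the same $k$-PN property, so the candidates reduce modulo this equivalence to $n \in \{3,\, 2+p\}$ for $k = 2$, and $n \in \{4,\, 3+p,\, 2+2p\}$ for $k=3$. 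The small case $p = 5$ for $k = 2$, not covered by Theorem \ref{teo2}, can be checked by hand on the finitely many exponents $n \leq 24$.

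The monomials $x^{k+1}$ are always $k$-PN when $p \geq k + 2$: expanding via the definition yields
\[
\nabla_{a_1, \dots, a_k}^{k} x^{k+1} = (k+1)!\, a_1 a_2 \cdots a_k\, x + c(a_1, \dots, a_k),
\]
which is an affine bijection in $x$ whenever $(k+1)!$ is a unit in $\F_p$. Hence $n = k+1$ and $n = (k+1)p$ always produce $k$-PN monomials, and it remains to analyze the other candidates.

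For each remaining exponent I would compute the $k$-th difference explicitly, using $(x + t)^{a + bp} = (x+t)^a (x^p + t^p)^b$ (valid in characteristic $p$) and the auxiliary quantities
\[
\Delta^k_m := \sum_{S \subseteq [k]} (-1)^{k - |S|} \Bigl(\sum_{i \in S} a_i\Bigr)^m.
\]
Several of these vanish: one has $\Delta^k_m = 0$ for all $m < k$, and the Frobenius absorption $(\sum a_i)^{p^r} = \sum a_i^{p^r}$ together with a parity of sign sums kills $\Delta^k_m$ for many $m \geq k$ as well. In each case of interest the $k$-th difference collapses to $L(x) + (\text{constant})$ with $L(x) = A x^p + B x$ an $\F_p$-linear polynomial in $x$; $L$ is a bijection of $\F_{p^2}$ iff $-B/A$ (when $A \neq 0$) does not lie in the subgroup $\mu_{p+1} := (\F_{p^2}^{*})^{p-1}$ of $(p-1)$-th powers, i.e.\ the norm-one subgroup.

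Carrying out the computations: for $n = 2 + p$ one obtains $A = 2a_1 a_2$ and $B = A(a_1^{p-1} + a_2^{p-1})$; setting $\mu = a_1^{p-1},\,\nu = a_2^{p-1} \in \mu_{p+1}$ and using $(\mu + \nu)^{p+1} = (\mu + \nu)^2/(\mu\nu)$, failure of bijectivity reduces to the existence of $\mu, \nu \in \mu_{p+1}$ with $\mu/\nu$ a primitive cube root of unity, equivalently $3 \mid p+1$, which yields the dichotomy $p \equiv \pm 1 \pmod 3$ in the statement. For $n = 3 + p$ one finds $A = 6 a_1 a_2 a_3$ and $B = A(a_1^{p-1} + a_2^{p-1} + a_3^{p-1})$; taking $a_1 = a_2 = 1$ and $a_3$ with $a_3^{p-1} = -1$ (possible since $-1 \in \mu_{p+1}$ for odd $p$) yields $-B/A = -1 \in \mu_{p+1}$, so $L$ fails to be injective and $x^{3+p}$ is never $3$-PN. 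For $n = 2 + 2p$ the expansion delivers the self-conjugate form $L(x) = A x^p + A^p x$, and since $(-A^{p-1})^{p+1} = (-1)^{p+1} = 1$ always holds for odd $p$, the equation $x^{p-1} = -A^{p-1}$ admits a nonzero solution whenever $A \neq 0$; thus $L$ is never bijective, so $x^{2 + 2p}$ is never $3$-PN. The main obstacle I anticipate is the algebraic bookkeeping that produces the formulas for $A$ and $B$, namely identifying precisely which $\Delta^k_m$ survive and evaluating them; once the $k$-th differences are in the form $A x^p + B x + c$, the subsequent arithmetic in $\mu_{p+1}$ is short and self-contained.
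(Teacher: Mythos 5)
Your proposal is correct and follows essentially the same route as the paper: reduce the candidate exponents via Theorem \ref{teo2} together with Frobenius equivalence, then settle each surviving case by writing the $k$-th difference as a linearized polynomial $Ax^p+Bx+c$ and testing whether $-B/A$ lies in the norm-one subgroup $\mu_{p+1}$; your cube-root-of-unity criterion for $n=2+p$ and your choices of directions for $n=3+p$ and $n=2+2p$ coincide with the paper's arguments. The only omission is that for the $3$-PN part Theorem \ref{teo2} needs $p\ge 2k+2=8$, so the primes $p=5$ and $p=7$ must also be disposed of by a finite direct check (as the paper does), not only $p=5$ in the $k=2$ case.
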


We conclude with results for $\F_{p^4}$.

\begin{theorem}\label{teo4}
Let $f = x^n$ be a $2$-PN monomial over $\F_{p^4}, p \ge 5$, $\deg(f) \le p^4-1$, and suppose $n$ not divisible by $p$. Then $n \in \{3,2+p^2, 1+2p^2\}$.
\end{theorem}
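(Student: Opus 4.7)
The plan has three stages. First, I would extend the digit-sum argument underlying Theorem~\ref{teo2} from $\F_{p^2}$ to $\F_{p^4}$, showing that if $x^n$ is $2$-PN over $\F_{p^4}$ with $n\le p^4-1$ and $p\ge 5$, then the base-$p$ digits of $n$ satisfy $n_0+n_1+n_2+n_3=3$. Together with $p\nmid n$, hence $n_0\ge 1$, this cuts the possibilities down to ten exponents: $n=3$; $n=2+p^j$ and $n=1+2p^j$ for $j\in\{1,2,3\}$; and $n=1+p^i+p^j$ for $1\le i<j\le 3$. The task is to eliminate the seven not in $\{3,\,2+p^2,\,1+2p^2\}$.

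The second stage computes $\phi_{a,b}(x):=(x+a+b)^n-(x+a)^n-(x+b)^n+x^n$ as a function on $\F_{p^4}$. Using $(x+c)^{n_j p^j}=(x^{p^j}+c^{p^j})^{n_j}$ on $\F_{p^4}$, one expands the four products and observes that most terms cancel because of the small digit sum, leaving $\phi_{a,b}$ as an $\F_p$-linearized polynomial plus an $(a,b)$-dependent constant. For $n=2+p^j$ one finds
\[
\phi_{a,b}(x)=2ab\,x^{p^j}+2(ab^{p^j}+a^{p^j}b)\,x+C,
\]
(and symmetrically for $n=1+2p^j$), while for $n=1+p^i+p^j$ one obtains a three-term linearized polynomial. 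The $2$-PN property is then equivalent to demanding that, for every $(a,b)\in(\F_{p^4}^*)^2$, the non-constant linear part have trivial kernel on $\F_{p^4}$.

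The third stage disposes of each bad candidate. For $n=2+p^j$ with $j\in\{1,3\}$, the kernel equation reduces to $x^{p^j-1}=-(a^{p^j-1}+b^{p^j-1})$; since $\gcd(p^j-1,p^4-1)=p-1$ in both cases, this asks that the right-hand side lie in the $(p-1)$-th power subgroup $H\subset\F_{p^4}^*$ of index $p-1$. A Weil-type character-sum estimate shows that for $p\ge 5$ there exist $a,b\in\F_{p^4}^*$ with $a^{p-1}+b^{p-1}\in H\setminus\{0\}$, producing the counterexample, and $n=1+2p^j$ for $j\in\{1,3\}$ is symmetric. For $j=2$ the same computation yields instead a condition in the norm-one subgroup of order $p^2+1$; raising to the $(p^2+1)$-th power turns the condition into $(h_1+h_2)^2=h_1 h_2$, which forces $h_1/h_2$ to be a primitive cube root of unity---impossible in a cyclic group of order $p^2+1$ since $3\nmid p^2+1$. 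Hence $x^{2+p^2}$ (and $x^{1+2p^2}$) is genuinely $2$-PN, consistent with the statement.

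The main obstacle is the three-digit case $n=1+p^i+p^j$, whose kernel condition is the vanishing at some nonzero $x\in\F_{p^4}$ of the three-term linearized polynomial
\[
L_{a,b}(x)=(ab^p+a^pb)\,x^{p^2}+(ab^{p^2}+a^{p^2}b)\,x^p+(a^pb^{p^2}+a^{p^2}b^p)\,x.
\]
I plan a two-step attack. First, restrict $a,b\in\F_{p^2}^*$ and exploit the decomposition $\F_{p^4}=\F_{p^2}\oplus\F_{p^2}\theta$ with $\theta^{p^2}=-\theta$, which splits $L_{a,b}(x)=0$ into two $\F_{p^2}$-equations, the nontrivial one reading $x_0^{p-1}=-(a^{p-1}+b^{p-1})$ in $\F_{p^2}$---exactly the obstruction for $x^{2+p}$ over $\F_{p^2}$. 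By Theorem~\ref{teo3} this has solutions precisely when $p\equiv 2\pmod 3$, handling that regime. For $p\equiv 1\pmod 3$, where Theorem~\ref{teo3} shows $x^{2+p}$ is itself $2$-PN over $\F_{p^2}$ and the subfield restriction provides no obstruction, I would take $a=1$ and let $b$ range over $\F_{p^4}^*$, reducing to finding $b$ for which $(b+b^p)y^{p^2}+(b+b^{p^2})y^p+(b^p+b^{p^2})y$ has a nonzero root in $\F_{p^4}$; producing such $b$ via a Moore-determinant vanishing criterion or a direct character-sum count is the decisive step and the point where $p\ge 5$ is essential.
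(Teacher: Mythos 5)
Your proposal covers only a fraction of what the theorem requires, and the part you dispatch in one sentence is actually the bulk of the work. From Theorem \ref{teo1} one only gets $n\equiv 3\pmod{p-1}$, i.e.\ digit sum $n_0+n_1+n_2+n_3\in\{3,\,p+2,\,2p+1,\,3p\}$; your Stage~1 asserts that the digit sum must equal $3$ by ``extending the digit-sum argument underlying Theorem~\ref{teo2},'' but that extension is not routine. Over $\F_{p^2}$ the paper computes the top coefficient of $f^{1+p}$ and it collapses to the single closed-form sum $S(k,2k+2)$ of Lemma~\ref{lemmakpn}; over $\F_{p^4}$ the analogous quantities are the top coefficients of $(\nabla_{1,t}f)^{1+p+p^2+p^3}$ (for digit sum $p+2$) and $(\nabla_{1,1}f)^{1+p^2}$ (for digit sum $2p+1$), which factor as products $M_1M_2M_3M_4$ of four sums depending on the individual digits $a,b,c,d$, not just their sum. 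The paper has to split into many subfamilies (e.g.\ $n=1+bp+(p+1-b)p^3$, $n=2+bp+(p-b)p^3$, etc.), evaluate the resulting degree-$8$ and degree-$12$ polynomials in $b$ by computer, run Euclidean algorithms to rule out common roots, and separately dispose of exceptional primes such as $19$, $12497$, and $156797$. Nothing in your plan produces these computations or even identifies that two different Hermite--Dickson exponents ($1+p+p^2+p^3$ versus $1+p^2$) are needed for the two remaining digit sums. Until the digit sums $p+2$, $2p+1$, $3p$ are actually eliminated, the list of ten candidates in your Stage~1 is unjustified.

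Within the digit-sum-$3$ case your treatment of $n=2+p^j$ is sound (and for $j=2$ your cube-root-of-unity argument in the subgroup of order $p^2+1$, using $3\nmid p^2+1$, is cleaner than the paper's), but the case $n=1+p^i+p^j$ is left genuinely open. Restricting to directions $a,b\in\F_{p^2}^*$ reproduces, as you correctly note, the kernel equation $x_0^{p-1}=-(a^{p-1}+b^{p-1})$ of $x^{2+p}$ over $\F_{p^2}$, which by Theorem~\ref{teo3} has a solution only when $p\equiv -1\pmod 3$; for $p\equiv 1\pmod 3$ you defer to an unspecified ``Moore-determinant vanishing criterion or character-sum count,'' which is precisely the decisive step. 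The paper closes this case uniformly by taking a direction \emph{outside} the subfield: writing $v=1+t$ with $t^2=m$ a non-square of $\F_{p^2}$, the system reduces to $(1+m)x_1^p+2x_1=0$, solvable whenever $2/(1+m)$ has norm $1$ over $\F_p$, and Lemma~\ref{nonquad} guarantees a non-square $m$ with $\mathrm{Norm}(1+m)=4$ for all $p\ge 5$. Some such non-subfield choice of direction (or an equivalent existence argument) is needed to complete your proof; as written it fails for half of all primes.
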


\begin{theorem}\label{teo5}
Let $f = x^n$ be a $3$-PN monomial over $\F_{p^4}, p \ge 5$, $\deg(f) \le p^4-1$, and suppose $n$ not divisible by $p$. Then $n = 4$.
\end{theorem}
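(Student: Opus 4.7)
The plan is to adapt the structure of the proof of Theorem~\ref{teo4} from $k=2$ to $k=3$. Write $n$ in base $p$ as $n = a_0 + a_1 p + a_2 p^2 + a_3 p^3$ with $0 \le a_i \le p-1$; the hypothesis $p \nmid n$ gives $a_0 \ne 0$. Composing $x^n$ with the Frobenius automorphism of $\F_{p^4}$ preserves the $3$-PN property and cyclically rotates the digit vector $(a_0,a_1,a_2,a_3)$, so we may work up to this $\Z/4\Z$-action.

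First I would establish the digit-sum constraint $a_0+a_1+a_2+a_3 = 4$, extending Theorem~\ref{teo2} from $\F_{p^2}$ to $\F_{p^4}$. The tool is the freshman's-dream factorization
\[
(x+a)^n = \prod_{j=0}^{3}\bigl(x^{p^j}+a^{p^j}\bigr)^{a_j},
\]
which reorganises $\nabla^3_{a_1,a_2,a_3} x^n$ as a sum indexed by how the three difference operators distribute among the four $p$-power factors. Bijectivity on $\F_{p^4}$, together with a Hermite-type criterion applied to directions chosen in suitable subfields, then forces the total number of operator-hits, $a_0+a_1+a_2+a_3$, to equal $k+1 = 4$. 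Combined with $a_0 \ge 1$, enumeration leaves (up to cyclic rotation) the candidate patterns $(4,0,0,0)$, $(3,1,0,0)$, $(3,0,1,0)$, $(2,2,0,0)$, $(2,1,1,0)$, $(2,0,2,0)$, $(2,1,0,1)$, $(2,0,1,1)$, $(1,1,1,1)$.

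Finally, I would rule out every candidate except $(4,0,0,0)$ by producing, for some directions $(a_1,a_2,a_3)\in(\F_{p^4}^\times)^3$, distinct $x,y\in\F_{p^4}$ with $\nabla^3_{a_1,a_2,a_3}x^n(x) = \nabla^3_{a_1,a_2,a_3}x^n(y)$. The pattern $(1,1,1,1)$ is immediate, since $n = (p^4-1)/(p-1)$ is the norm exponent and $x^n\in\F_p$, so its iterated differences cannot be surjective onto $\F_{p^4}$. For the remaining patterns I would choose $a_3$ in a subfield so that $\nabla^1_{a_3}x^n$ inherits Galois symmetry, reducing the $3$-PN condition to a $2$-PN condition already forbidden by Theorems~\ref{teo4} and~\ref{teo3}.

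The main obstacle will be the patterns such as $(2,1,1,0)$ and $(3,1,0,0)$, whose rotations give $2$-PN exponents in smaller fields (compare Theorem~\ref{teo3}); here the naive lower-$k$ reduction does not immediately produce a contradiction, and one likely needs a direct study of the algebraic curve
\[
C:\ \nabla^3_{a_1,a_2,a_3}x^n(X) \;=\; \nabla^3_{a_1,a_2,a_3}x^n(Y)
\]
together with a Weil-type point count on an absolutely irreducible component, to force an $\F_{p^4}$-rational collision whenever $p \ge 5$.
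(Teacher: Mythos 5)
There is a genuine gap, and it sits exactly where the real work of the theorem lies. Your first step asserts the digit-sum constraint $a_0+a_1+a_2+a_3=4$, to be ``forced'' by a Hermite-type criterion applied to directions chosen in subfields. But restricting the directions to $\F_{p^2}$ (resp.\ $\F_p$) only controls $n$ modulo $p^2-1$ (resp.\ $p-1$); Theorem~\ref{teo1} gives $n\equiv 4\pmod{p-1}$, i.e.\ $a_0+a_1+a_2+a_3\in\{4,\,p+3,\,2p+2,\,3p+1\}$, and subfield reductions cannot separate these. For instance, if $a_0+a_2=3$ and $a_1+a_3=p$ (digit sum $p+3$), then $n\equiv 4\pmod{p^2-1}$, so the restriction of $x^n$ to $\F_{p^2}$ is $x^4$, which \emph{is} $3$-PN there: no subfield argument can kill this case. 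The paper's proof spends most of its length (Cases 2--4) eliminating the digit sums $p+3$, $2p+2$, $3p+1$ by applying Hermite--Dickson over $\F_{p^4}$ itself to $(\nabla_{1,1,t}f)^{1+p+p^2+p^3}$ and $(\nabla_{1,1,1}f)^{1+p^2}$, factoring the top coefficient as $M_1M_2M_3M_4$, and checking nonvanishing of explicit degree-$12$ polynomials in $b$ via the Euclidean algorithm, with about a dozen exceptional primes handled separately by computer. None of this is replaced by anything in your sketch. Note also that your proposed reduction ``$\nabla^1_{a_3}x^n$ is $2$-PN, contradicting Theorems~\ref{teo3} and~\ref{teo4}'' does not apply: those theorems classify $2$-PN \emph{monomials}, and $\nabla^1_{a_3}x^n=(x+a_3)^n-x^n$ is not a monomial.

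Within the digit-sum-$4$ case your enumeration misses the class of $3+p^3$ (not a rotation of $3+p$ or $3+p^2$), and you have misidentified which candidates are hard: $(3,1,0,0)$ and $(2,1,1,0)$ reduce modulo $p^2-1$ to $3+p$, which is not $3$-PN over $\F_{p^2}$ by Theorem~\ref{teo3}, so they fall immediately with no need for a Weil-type point count; the actual survivors of the subfield reduction are $3+p^2$ and $2+2p^2$ (both $\equiv 4\pmod{p^2-1}$), which the paper kills by exhibiting explicit non-bijectivity of the linearized difference --- a solution of $y^{p^2-1}+z^{p^2-1}+t^{p^2-1}+1=0$ built from a norm-one element $w$ with $w^{p^2-1}=-1$, and non-injectivity of $x^{p^2}+x$. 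Your norm-map argument for $(1,1,1,1)$ is correct and slightly slicker than the paper's reduction, but overall the proposal leaves the main technical content of the theorem unproved.
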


In the second section we present the tools that we need in order to obtain proofs of the previous theorems, effort that we accomplish in the last section.

\section{Preliminary Results}
\label{Preliminary Results}

The main ingredient in the proofs is the following well known result, which we recall as a courtesy to the reader.

\begin{theorem}[Hermite-Dickson criterion, \cite{MR746963}]

Let $\F_q$ be of characteristic $p$. Then $f \in \F_q[x]$ is a permutation polynomial of $\F_q$ if and only if the following two conditions hold:
\begin{enumerate}
	\item $f$ has exactly one root in $\F_q$,
	\item for each integer $t$ with $1 \leq t \leq q-2$ and $t \not\equiv 0 \pmod p$, the reduction of $f(x)^t \pmod{x^q-x}$ has degree less than or equal to $q-2$.
\end{enumerate}

\end{theorem}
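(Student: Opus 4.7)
The plan is to reformulate ``$f$ is a permutation of $\F_q$'' as a statement about the power sums $p_t := \sum_{c \in \F_q} f(c)^t$ and then read those power sums off from the reductions of $f(x)^t$ modulo $x^q - x$.

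First I would set up a dictionary between multiplicities and power sums. Let $N_d := \#\{c \in \F_q : f(c) = d\}$, so that $p_t = \sum_{d \in \F_q} N_d\, d^t$; the tuple $(p_0, \dots, p_{q-1})$ is therefore obtained from $(N_d)_{d \in \F_q}$ by a Vandermonde-type matrix in the distinct elements of $\F_q$, which is invertible over $\F_q$. Using the standard computation $\sum_{d \in \F_q} d^t = 0$ for $t = 0$ and for $1 \leq t \leq q-2$, and $= -1$ for $t = q-1$, the target tuple $(0, 0, \dots, 0, -1)$ is realised by the choice $N_d \equiv 1$. Hence once I have established that equality of power sums for $0 \le t \le q-1$, I obtain $N_d \equiv 1 \pmod{p}$ for every $d$; combined with $\sum_d N_d = q$ (as an identity in $\Z$) and $N_d \ge 0$, this forces $N_d = 1$ for all $d$, meaning $f$ is a permutation.

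Next I would compute $p_t$ directly from $f(x)^t$ modulo $x^q - x$. Writing $f(x)^t \equiv \sum_{j=0}^{q-1} a_j x^j$ and evaluating at any $c \in \F_q$ (where $c^q = c$), one obtains $p_t = \sum_j a_j \sum_{c} c^j = -a_{q-1}$, since the only $j$ in the range $0 \le j \le q-1$ for which $\sum_c c^j$ is nonzero is $j = q-1$. So condition (2)'s requirement ``degree $\leq q-2$'' is exactly $p_t = 0$, which handles all $t$ with $1 \le t \le q-2$ and $p \nmid t$. For $t$ divisible by $p$, writing $t = p s$ and invoking the Frobenius identity $p_{ps} = p_s^p$ (a consequence of $(x+y)^p = x^p + y^p$ in characteristic $p$) bootstraps the vanishing from the already-handled case $p_s = 0$.

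Finally I would deal with $p_{q-1}$ separately. Since $f(c)^{q-1}$ equals $1$ when $f(c) \neq 0$ and $0$ otherwise, $p_{q-1} = q - |R| \equiv -|R| \pmod{p}$, where $R$ is the root set of $f$ in $\F_q$. Condition (1), $|R| = 1$, therefore gives $p_{q-1} = -1$; conversely, if $f$ is a permutation then $R = \{f^{-1}(0)\}$ has exactly one element and the degree conditions in (2) are immediate. The main subtlety I anticipate is the lift from the $\F_p$-level equality $N_d \equiv 1$ to the integer equality $N_d = 1$, but this is resolved by the elementary observation that the $N_d$ are nonnegative integers each congruent to $1 \pmod p$ and summing to $q$, so each $N_d \ge 1$ and then $\sum_d (N_d - 1) = 0$ forces $N_d = 1$ identically.
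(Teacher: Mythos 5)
The paper does not prove this theorem; it is quoted as a known result with a citation to Lidl--Niederreiter, so there is no internal proof to compare against. Your argument is correct and is essentially the classical proof of the Hermite criterion found in that reference: the power sums $p_t=\sum_c f(c)^t$ are read off as $-a_{q-1}$ from the reduction of $f^t$ modulo $x^q-x$, the case $p\mid t$ is recovered via $p_{ps}=p_s^p$, the value distribution $(N_d)$ is pinned down modulo $p$ by the invertible Vandermonde system, and the lift from $N_d\equiv 1\pmod p$ to $N_d=1$ uses nonnegativity together with $\sum_d N_d=q$. All steps check out, including the treatment of $t=q-1$ via condition (1).
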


Another known result is the following.

\begin{Lemma}[\cite{MR1505161}, \cite{MR1505164}, \cite{MR1505176}]\label{Lucas}
Let $p$ be a prime and let $\alpha,\beta$ be non-negative integers with base $p$ expansions $\alpha = \sum_i \alpha_i p^i$ and $\beta = \sum_j \beta_j p^j$. Then
$$\binom{\alpha}{\beta} \equiv \prod_i \binom{\alpha_i}{\beta_i} \pmod p,$$
where we use the convention $\binom{n}{k} = 0$, if $n < k$.
\end{Lemma}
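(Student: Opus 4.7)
The plan is to apply the Hermite--Dickson criterion (Theorem~2.1) to the third finite difference
\[
F(x) := \nabla^3_{a_1, a_2, a_3}\, x^n = \sum_{j=3}^{n} \binom{n}{j}\, D_j(a_1, a_2, a_3)\, x^{n-j},
\]
where $D_j(a_1,a_2,a_3) := \sum_{S\subseteq\{1,2,3\}} (-1)^{3-|S|}\bigl(\sum_{i\in S} a_i\bigr)^j$. The polynomial $F$ has degree $n-3$ with leading coefficient $n(n-1)(n-2)\,a_1 a_2 a_3$, and the goal is to force $F$ to permute $\F_{p^4}$ for all nonzero directions only when $n=4$.

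My first step would be to establish the digit-sum constraint $a_0 + a_1 + a_2 + a_3 = 4$ for the base-$p$ expansion $n = \sum_{i=0}^3 a_i p^i$, paralleling Theorem~\ref{teo2}. To obtain the upper bound $\sum a_i \le 4$, I would take the smallest $t \ge 1$ with $t(n-3) \equiv 0 \pmod{p^4-1}$ and inspect the coefficient of $x^{p^4-1}$ in $F(x)^t \bmod (x^{p^4}-x)$: its leading contribution is a scalar multiple of $\binom{n}{3}^t$, which by Lemma~\ref{Lucas} is forced to vanish unless $\sum a_i$ is small. The opposite bound $\sum a_i \ge 4$ comes from specializing the direction and inspecting $\binom{n}{3} \bmod p$. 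Combined with $p\nmid n$, i.e.\ $a_0\ge 1$, this yields exactly twenty candidate exponents: $n = 4$, together with three from the digit pattern $(3,1,0,0)$, six from the $(2,\ast,\ast,\ast)$ patterns summing to $4$, and ten from the $(1,\ast,\ast,\ast)$ patterns summing to $4$.

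The heart of the argument is the elimination of each of the nineteen non-trivial candidates. For each candidate $n$, I would specialize $(a_1, a_2, a_3)$ (e.g.\ $a_1 = a_2 = 1$, $a_3 = c$ a free parameter in $\F_{p^4}$), choose an exponent $t$ so that $t(n-3)$ or some subleading exponent appearing in the multinomial expansion of $F(x)^t$ hits a nonzero multiple of $p^4-1$, and then extract the coefficient of $x^{p^4-1}$ as a polynomial in $c$. Lemma~\ref{Lucas} applied to the resulting multinomial coefficients then shows at least one coefficient in $c$ is nonzero in $\F_p$, contradicting permutation. Frobenius symmetry ($x^n$ is $3$-PN iff $x^{np \bmod (p^4-1)}$ is) bundles some candidates into orbits and reduces the bookkeeping.

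Finally, for $n = 4$ a direct calculation yields
\[
\nabla^3_{a_1,a_2,a_3}\, x^4 \;=\; 24\, a_1 a_2 a_3\, x \;+\; 12\, a_1 a_2 a_3\, (a_1 + a_2 + a_3),
\]
which is linear with nonzero slope in $\F_{p^4}$ whenever $p \ge 5$ and the $a_i$ are nonzero; hence it is always a permutation. The principal obstacle will be the elimination step for the Frobenius-symmetric candidates, notably $n = 1 + p + p^2 + p^3$ and $n = 2 + 2p^2$, where the natural first choice of $t$ and direction yields multinomial coefficients that vanish by Lucas; producing a nonvanishing obstruction in these stubborn cases will require a less obvious $t$ paired with an asymmetric direction.
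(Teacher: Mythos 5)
Your proposal does not address the statement at all: the statement to be proved is Lucas's congruence
$\binom{\alpha}{\beta} \equiv \prod_i \binom{\alpha_i}{\beta_i} \pmod p$,
a purely arithmetic fact about binomial coefficients and base-$p$ digits, whereas what you have written is a proof strategy for Theorem~\ref{teo5} (the classification of $3$-PN monomials over $\F_{p^4}$). Nowhere in your text do you argue anything about $\binom{\alpha}{\beta}$ modulo $p$; instead you \emph{invoke} Lemma~\ref{Lucas} twice as a tool. Using the statement you are supposed to prove as an ingredient in a proof of a different theorem is not a proof of that statement, so there is nothing here to salvage as an argument for the Lemma. (For what it is worth, the paper itself gives no proof either --- it cites Lucas's original papers --- so the expected content is the standard argument: in $\F_p[x]$ one has $(1+x)^{p^i} = 1 + x^{p^i}$, hence
\begin{equation*}
(1+x)^{\alpha} \;=\; \prod_i \bigl((1+x)^{p^i}\bigr)^{\alpha_i} \;\equiv\; \prod_i \bigl(1+x^{p^i}\bigr)^{\alpha_i} \pmod p,
\end{equation*}
and comparing the coefficient of $x^{\beta}$ on both sides, using the uniqueness of the base-$p$ expansion of $\beta$, yields exactly $\binom{\alpha}{\beta} \equiv \prod_i \binom{\alpha_i}{\beta_i}$, with the convention $\binom{\alpha_i}{\beta_i}=0$ when $\beta_i>\alpha_i$.) If you intended to prove Theorem~\ref{teo5}, that is a separate submission; as an answer to this Lemma the proposal is a non sequitur.
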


We will also need the following lemma, in which we evaluate a certain sum.

\begin{Lemma}\label{lemmakpn}
Let $$S(k,r) = \sum_{i=0}^k \sum_{j=0}^k  (-1)^{i+j}  \binom{k}{i}\binom{k}{j} (i-j)^{r}.$$ Then $S(k,r) = 0$ if $r$ is odd or if $r < 2k$, 
$$S(k,2k) = (-1)^k (2k)!$$ and $$S(k,2k+2) = (-1)^k (2k)! k(k+1)(2k+1)/6 = (-1)^k (2k)! (1 + 2^2 + \cdots + k^2).$$
\end{Lemma}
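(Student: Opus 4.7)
The plan is to compute $S(k,r)$ via exponential generating functions, which dispatches all three claims at once. Set $P(x) = (1-e^x)^k = \sum_{i=0}^{k}(-1)^i\binom{k}{i}e^{ix}$, so that expanding the exponentials in Taylor series identifies
$$\sum_{i=0}^{k}(-1)^i\binom{k}{i}i^s \;=\; s!\,[x^s]\,P(x).$$
Expanding $(i-j)^r$ by the binomial theorem and regrouping, one gets
$$S(k,r) \;=\; r!\,[x^r]\,P(x)P(-x) \;=\; r!\,[x^r]\,(1-e^x)^k(1-e^{-x})^k.$$

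The main trick is the classical identity
$$(1-e^x)(1-e^{-x}) \;=\; 2 - e^x - e^{-x} \;=\; -4\sinh^2(x/2),$$
which reduces everything to reading off coefficients of a single even power series:
$$S(k,r) \;=\; (-1)^k\,4^k\,r!\,[x^r]\,\sinh^{2k}(x/2).$$

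From here, all three assertions are immediate. Since $\sinh$ is an odd function, $\sinh^{2k}(x/2)$ is even, so $S(k,r)=0$ for odd $r$. Since the Taylor expansion begins $\sinh(x/2) = (x/2) + (x/2)^3/6 + \cdots$, the lowest-order term of $\sinh^{2k}(x/2)$ is $(x/2)^{2k}$, giving $S(k,r)=0$ for $r<2k$ and
$$S(k,2k) \;=\; (-1)^k\,4^k\,(2k)!\,\cdot\,4^{-k} \;=\; (-1)^k(2k)!.$$
For $r=2k+2$, the next term in $\sinh^{2k}(x/2)$ comes from choosing the cubic term of $\sinh(x/2)$ exactly once and the linear term $2k-1$ times, contributing $2k\cdot(x/2)^{2k+2}/6 = k\,x^{2k+2}/(3\cdot 4^{k+1})$. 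Hence
$$S(k,2k+2) \;=\; (-1)^k\,4^k\,(2k+2)!\,\cdot\,\frac{k}{3\cdot 4^{k+1}} \;=\; (-1)^k\,\frac{k\,(2k+2)!}{12} \;=\; (-1)^k(2k)!\,\frac{k(k+1)(2k+1)}{6},$$
using $(2k+2)! = (2k)!(2k+1)(2k+2)$. The final equality $k(k+1)(2k+1)/6 = 1^2+2^2+\cdots+k^2$ is classical.

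None of the steps is a genuine obstacle; the only real insight needed is spotting the hyperbolic identity $(1-e^x)(1-e^{-x}) = -4\sinh^2(x/2)$, after which the double sum collapses to elementary coefficient extraction from a univariate power series.
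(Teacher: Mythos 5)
Your proof is correct, and it takes a genuinely different route from the paper's. The paper encodes $i^r$ with the operator $(xD)^r$ acting on an ordinary generating function: after reindexing, $S(k,r)$ becomes the evaluation at $x=-1$ of $(xD)^r\bigl((1+x)^{2k}/x^k\bigr)$, and the three claims are extracted by tracking how the multiplicity of the factor $(1+x)$ decays under repeated application of $xD$, with the $r=2k+2$ value requiring a separate induction on $k$ based on a recursion coming from $(xD)^2$. You instead use the exponential generating function $(1-e^x)^k$, so that $S(k,r)=r!\,[x^r]\,(1-e^x)^k(1-e^{-x})^k$, and the identity $(1-e^x)(1-e^{-x})=-4\sinh^2(x/2)$ collapses everything to reading off the first two Taylor coefficients of $\sinh^{2k}(x/2)$; I checked the coefficient extraction (in particular $[x^{2k+2}]\sinh^{2k}(x/2)=k/(3\cdot 4^{k+1})$ and the simplification $(2k+2)!\,k/12=(2k)!\,k(k+1)(2k+1)/6$) and it is right. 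Your version buys a uniform treatment of all three claims with no induction and makes the vanishing for odd $r$ and for $r<2k$ transparent from parity and order of vanishing of $\sinh$; the paper's version stays closer to binomial-coefficient manipulations and would adapt more directly if one wanted the analogous statement with $x$ kept as an indeterminate rather than evaluated. Either argument would serve the application in Theorem \ref{teo2}.
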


\begin{proof}

We note that when $r$ is odd the given sum is zero, since the coefficient of $(i-j)^r$ and $(j-i)^r$ is the same.\\
Changing indices of the sum we obtain

$$S(k,r) = \sum_{j=0}^k \sum_{i=-j}^{k-j} (-1)^i \binom{k}{j} \binom{k}{j+i} i^{r} = \sum_j \binom{k}{j} \sum_i (-1)^i \binom{k}{j+i} i^{r}.$$
This is the evaluation at $x = -1$ of the polynomial
$$\sum_{j=0}^k \binom{k}{j} (xD)^{r} \sum_{i=-j}^{k-j} \binom{k}{j+i} x^i = \sum_{j=0}^k \binom{k}{j} (xD)^{r} \frac{(1+x)^k}{x^j},$$
where $(xD)(f) = xD(f)$ and $D$ is the standard derivative. By linearity of this operator we can exchange the sums and we obtain that $S(k,r)$ is the evaluation at $x = -1$ of
$$(xD)^r (1+1/x)^k (1+x)^k = (xD)^r \frac{(1+x)^{2k}}{x^k}.$$
By direct computation we see that $$(xD)\frac{(1+x)^i}{x^j} = \frac{i(1+x)^{i-1}}{x^{j-1}} - \frac{j(1+x)^i}{x^j}.$$
Since the exponent of $(1+x)$ in the numerator decreases at most by one every time we apply the operator $xD$, we have that, if $r < 2k$, the numerator of $(xD)^r \frac{(1+x)^{2k}}{x^k}$ is divisible by $1+x$ and therefore, evaluating at $-1$, we obtain zero.\\
From the above formula we see that applying $k-1$ times the operator we obtain $(2k)_{k-1} \frac{(1+x)^{k+1}}{x} + p$, where $p$ is divisible by $(1+x)^{k+2}$. Now $(xD) (1+x)^i/x = i (1+x)^{i-1} - (1+x)^i/x$. From this we have that $$(xD)^k \frac{(1+x)^{2k}}{x^k} = (2k)_{k} (1+x)^k + q,$$ where $q$ is divisible by $(1+x)^{k+1}$.

Now $(xD) (1+x)^i = i x (1+x)^{i-1}$ and $(xD) (i x (1+x)^{i-1}) = i(i-1)x^2 (1+x)^{i-2} + i x (1+x)^{i-1}$. From this formula we immediately see that 

$$(xD)^{2k} \frac{(1+x)^{2k}}{x^k} = (2k)! x^k + s,$$
where $s$ is divisible by $(1+x)$. Thus, evaluating it at $x=-1$, we obtain $S(k,2k) = (-1)^k (2k)!$.\\
To prove the thesis for $r = 2k+2$ we will use induction. By direct computation we see that the conclusion is true for $k \in \{1,2,3\}$.\\
Taking the second derivative, we have to consider the evaluation at $x=-1$ of

$$(xD)^{2k} \left((xD)^2 \frac{(1+x)^{2k}}{x^k} \right) = (xD)^{2k}  \left(k^2 \frac{(1+x)^{2k}}{x^k} - 2k(2k-1) \frac{(1+x)^{2k-2}}{x^{k-1}} \right).$$
The first term is $k^2S(k,2k) = (-1)^{k}k^2(2k)!$. We apply now the inductive hypothesis on the second term and we obtain $-2k(2k-1)(-1)^{k-1}(2k-2)! (1+2^2 + \cdots + (k-1)^2)$. Summing these two terms we get $(-1)^k (2k)! (1 + 2^2 + \cdots + k^2)$ and the conclusion has been proved.
\end{proof}

The last result is a technical lemma.

\begin{Lemma}\label{nonquad}
Suppose $p \ge 5$ and consider the field extension $\F_{p^2}$ over $\F_p$. Then there exists a non-square element $m$ in $\F_{p^2}$ such that $\mathrm{Norm}(1+m) = 4$.
\end{Lemma}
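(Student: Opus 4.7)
The plan is to parametrize the set of $y\in\F_{p^2}^{\ast}$ with $\mathrm{Norm}(y)=4$ via the norm-one subgroup, reduce the non-squareness of $m=y-1$ to the non-squareness of a scalar in $\F_p^{\ast}$, and then close the argument by an elementary character-sum count over $\F_p$.

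First I will note that since $2^{p+1}=4$, any such $y$ may be written $y=2\zeta$ with $\zeta$ lying in the norm-one subgroup $K:=\{\zeta\in\F_{p^2}^{\ast}:\zeta^{p+1}=1\}$ of order $p+1$; for $p\geq 5$ no such $\zeta$ equals $1/2$, so $m=2\zeta-1$ is automatically nonzero. Next, using the identity $m^{(p^2-1)/2}=\mathrm{Norm}(m)^{(p-1)/2}$, I will observe that $m$ is a non-square in $\F_{p^2}^{\ast}$ iff $\mathrm{Norm}(m)$ is a non-square in $\F_p^{\ast}$; and a direct expansion gives $\mathrm{Norm}(2\zeta-1)=4\zeta^{p+1}-2(\zeta+\zeta^p)+1=5-2t$, where $t:=\zeta+\zeta^p\in\F_p$. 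By solving the quadratic $\zeta^2-t\zeta+1=0$, the image $\mathrm{Tr}(K)\subset\F_p$ turns out to be precisely $\{\pm 2\}\cup\{t\in\F_p:t^2-4\text{ is a non-square}\}$, and the two values $t=\pm 2$ give $5-2t\in\{1,9\}$, both squares. So the problem will reduce to exhibiting a $t\in\F_p$ with both $t^2-4$ and $5-2t$ non-squares.

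For this last step I will run a simple character count. Letting $\chi$ be the quadratic character of $\F_p$ and $N_{\epsilon_1,\epsilon_2}:=\#\{t\in\F_p:\chi(t^2-4)=\epsilon_1,\;\chi(5-2t)=\epsilon_2\}$, the standard identity $\sum_t\chi(t^2-4)=-1$ (quadratic with nonzero discriminant), the identity $\sum_t\chi(5-2t)=0$ (since $5-2t$ is a linear bijection), and direct inspection of the three boundary inputs $t\in\{\pm 2,5/2\}$ (distinct for $p\geq 5$, with the other factor landing in $\{1,9,9/4\}$, all squares) will pin down the row and column marginals of the matrix $(N_{\epsilon_1,\epsilon_2})$. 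Subtracting these marginals gives $N_{--}=N_{++}+2\geq 2$, producing the required $t$; any $\zeta\in K$ in its trace fiber then yields the desired non-square $m$.

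The only real subtlety will be the careful bookkeeping of the three exceptional inputs $\pm 2$ and $5/2$; this is exactly where the hypothesis $p\geq 5$ enters, both to guarantee the three inputs are distinct and to verify that the boundary values $1,9,9/4$ are squares. Everything else is a linear combination of elementary character identities, with no need for a Weil-type estimate.
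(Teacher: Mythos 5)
Your proof is correct, and at its core it performs the same reduction as the paper's, reached by a different and arguably more conceptual route. The paper writes $m=m_1+m_2s$ in coordinates and reduces to finding a non-square $k=\mathrm{Norm}(m)$ with $k^2-10k+9$ also a non-square; you parametrize the norm-$4$ fiber as $2\zeta$ with $\zeta$ in the norm-one subgroup and reduce to finding $t=\mathrm{Tr}(\zeta)$ with $5-2t$ and $t^2-4$ both non-squares. Under the substitution $k=5-2t$ one has $k^2-10k+9=4(t^2-4)$, so the two pairs of conditions are literally the same. Where the arguments genuinely diverge is the final count: the paper parametrizes the conic $k^2-10k+9=h^2$ as $(k+h-5)(k-h-5)=16$, getting exactly $p-1$ points and discarding those with $k$ a square, while you use the two marginal identities $\sum_t\chi(t^2-4)=-1$ and $\sum_t\chi(5-2t)=0$ and exploit the fact that adding the two resulting linear relations eliminates $N_{+-}$ and $N_{-+}$, giving $N_{--}=N_{++}+2\geq 2$ with no control needed on the joint cubic sum $\sum_t\chi\bigl((t^2-4)(5-2t)\bigr)$ --- this cancellation is the one nontrivial observation that lets you avoid Weil, and it checks out. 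Your bookkeeping of the exceptional inputs $t\in\{2,-2,5/2\}$ (distinct since $p\neq 3$, companion values $1$, $9$, $9/4$ all squares) is the exact analogue of the paper's four exceptional solutions $(1,0)$, $(9,0)$, $(0,\pm 3)$, and both methods deliver the same conclusion that at least two good parameter values exist. In short: same skeleton, equivalent reduction, but a genuinely different (character-sum) counting step that trades the auxiliary variable $h$ for the quadratic character.
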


\begin{proof}
Consider the basis $(1,s)$ of $\F_{p^2}$ over $\F_p$, where $s^2 = t$, a non-square element in the base field. We write $m = m_1 + m_2s$ and let $k = \mathrm{Norm}(m) = m_1^2 - tm_2^2$. We need to find $m_1$ and $m_2$ in $\F_p$ such that $m$ is not a square and $(1 + m_1 + m_2s)(1 + m_1 + m_2s)^p = 4$. If we expand it we obtain the equation $2m_1 + k-3 = 0$. Consider the following system:

\begin{center}
$\begin{cases}
2m_1 + k-3 = 0 \\
m_1^2 - tm_2^2 = k 
\end{cases}$
\end{center} 
 
We need to find a solution ($m_1,m_2,k$) such that $k$ is not a square in $\F_p$. From the first equation we obtain $2m_1 = 3-k$. Substituting in the second one we obtain
$$4tm_2^2 = k^2 - 10k + 9.$$
If we find $k \in \F_p$ such that both $k$ and $k^2 - 10k + 9$ are not squares we are done, since we can take $m_2$ satisfying $m_2^2 = ( k^2 - 10k + 9)/4t$. We note that $f(k) = k^2 - 10k + 9 = (k-1)(k-9)$. Because the equation $k^2-10k+9=h^2$ can be written as
$(k+h-5)(k-h-5)=16$, it has exactly $p-1$ solutions
$(k,h)\in\F_p^2$. Four of them are $(1,0)$, $(9,0)$ and $(0,\pm
3)$. Hence the equation can have at most $p-5$ solutions where $k$
is not a square. Because those come in pairs $(k,\pm h)$, there are
at least two of the $(p-1)/2$ non-squares $k$ in $\F_p$ such that
$k^2-10k+9$ is not a square.
\end{proof}

\section{Proofs}
\label{proofs}
We can now present the proof of the main results, stated in the introduction.

\begin{proof}[Proof of Theorem \ref{teo1}]
We know from the main theorem of \cite{MR1045927} that a polynomial $f$ such that $f(x+a)-f(x)$ is a permutation polynomial for every $a \in \F_q, a \neq 0$, must be a quadratic polynomial. This implies that $\nabla_{a_1,\dots,a_{k-1}}^{k-1} x^n$ has degree $2$. Every time we apply the finite difference operator the degree drops by $1$. Therefore we must have that $n = k+1$. We note that $\nabla_{a_1,\dots,a_{k}}^{k} x^{k+1} = (k+1)!a_1 a_2 \cdots a_k x + c$, for some constant $c$. Thus $x^{k+1}$ is a $k$-PN monomial.
\end{proof}

\begin{proof}[Proof of Theorem \ref{teo2}]
Consider $g = \nabla_{a_1,\dots,a_k}^k f$, where $a_i \in \F_p$. Since $g$ is defined over $\F_p$ we have that $g(\F_p) \subset \F_p$, hence $g$ should be a permutation polynomial of the prime field for every choice of the $k$ directions. From Theorem \ref{teo1} we know that $n \equiv k+1 \pmod{p-1}$. \\
Let now $f(x)$ be the $k$-th derivative (always in direction $1$) of $x^n$. We have $$f(x) = (-1)^k \sum_{i = 0}^{k} (-1)^i \binom{k}{i} (x+i)^n.$$ 
If we write $n = a + bp$ we have two possible cases, $a+b = k+1$ and $a + b = p+k$. We want to exclude the latter. We consider $f^{1+p} \pmod{x^{p^2} - x}$ and show that it has degree $p^2-1$. Then, by the Hermite-Dickson Criterion, $f$ cannot be a permutation polynomial.\\
We have $$ f(x)^{1+p} = \sum_{i=0}^k \sum_{j=0}^k (-1)^{i+j} \binom{k}{i} \binom{k}{j} (x+i)^n (x+j)^{pn}.$$

Consider now the single term $(x+i)^n (x+j)^{pn}$, $i,j \neq 0$. Expanding it we obtain
$$ \sum_{\alpha = 0}^a \sum_{\beta = 0}^b \sum_{\gamma = 0}^a \sum_{\delta = 0}^b \binom{a}{\alpha}\binom{b}{\beta}\binom{a}{\gamma}\binom{b}{\delta} i^{a-\alpha + (b - \beta)p} j^{a - \gamma + (b - \beta)p} x^{\alpha + \delta + (\beta + \gamma)p}.$$
Since $\alpha + \delta + (\beta + \gamma)p < (p + (p-1)/2) + (p  + (p-1)/2)p < 2(p^2-1)$ for $p \geq 5$, we need to consider only the case $\alpha + \delta = \beta + \gamma = p-1$.\\
Hence the coefficient $c$ of the term of degree $p^2-1$ will be

$$\sum_{\alpha = a-k-1}^a \sum_{\beta = b-k-1}^b \binom{a}{\alpha} \binom{b}{\beta} \binom{a}{p-1-\beta}\binom{b}{p-1-\alpha} i^{a - \alpha + b-\beta} j^{a- \gamma + b- \beta}.$$
We rewrite this sum as $C_1 C_2$ where
$$ C_1 = \left( \sum_{\alpha = a-k-1}^a \binom{a}{\alpha} \binom{b}{\alpha + 1+ k-a} i^{a - \alpha} j^{\alpha + 1 + k -a} \right)$$ and $$C_2 = \left( \sum_{\beta = b-k-1}^b \binom{b}{\beta} \binom{a}{\beta + 1+ k-a} i^{b - \beta} j^{\beta + 1 + k - b} \right).$$
Finally we obtain
$$\left( \sum_{l=0}^{k+1} \binom{a}{k+1-l}\binom{b}{l} i^{k+1-l} j^l \right) \left( \sum_{l=0}^{k+1} \binom{b}{k+1-l}\binom{a}{l} i^{k+1-l} j^l \right).$$

Consider the single term $\binom{a}{k+1-l}\binom{b}{l}$. Using the fact that $a + b \equiv k \pmod p$ we have

$$\binom{b}{l} = \frac{b(b-1) \cdots  (b-l+1)}{l!} \equiv (-1)^l \frac{(a-k)(a-k+1) \cdots (a-k+l-1)}{l!}.$$
Therefore $$\binom{a}{k+1-l}\binom{b}{l} \equiv (-1)^l \binom{a}{k+1}\binom{k+1}{l} \pmod p.$$
Finally, we see that $c$ is equivalent modulo $p$ to

$$\left( \binom{a}{k+1} \sum_{l=0}^{k+1} (-1)^l \binom{k+1}{l} i^{k+1-l} j^l \right) \left( \binom{b}{k+1} \sum_{l=0}^{k+1} (-1)^l \binom{k+1}{l} i^{k+1-l} j^l \right).$$
Noting that $\binom{b}{k+1} \equiv (-1)^{k+1} \binom{a}{k+1} \pmod p$ we have that

$$c \equiv (-1)^{k+1} \binom{a}{k+1}^2 (i-j)^{2k+2} \pmod p.$$
Consider now the term $x^n (x+j)^{pn}$. Expanding it we obtain

$$\sum_{\alpha = 0}^a \sum_{\beta = 0}^b \binom{a}{\alpha}\binom{b}{\beta} j^{a - \alpha + (b - \beta)p} x^{a + \beta + (\alpha + b)p}.$$
As before we need only to consider the case $ a + \beta = \alpha + b = p-1$. Thus the coefficient $c$ of the term of degree $p^2-1$ will be

$$ c = \binom{a}{k+1} \binom{b}{k+1} j^{2k+2} \equiv (-1)^{k+1} \binom{a}{k+1}^2 j^{2k+2}.$$
In the same way we see that the monomial of degree $p^2-1$ of $x^{np} (x+j)^{n}$ has the same coefficient.

We have seen that the coefficient modulo $p$ of the term of degree $p^2-1$ of $(x+i)^n (x+j)^{pn} \pmod{x^{p^2} - x}$ equals $(-1)^{k+1} \binom{a}{k+1}^2 (i-j)^{2k+2}$.\\
Summing up all the terms we obtain that the coefficient of the term of degree $p^2-1$ of $f(x)^{1+p} \pmod{x^{p^2} - x}$ is

$$(-1)^{k+1} \binom{a}{k+1}^2 \sum_{i=0}^k \sum_{j=0}^k (-1)^{i+j} \binom{k}{i}\binom{k}{j} (i-j)^{2k+2}.$$
Thanks to Lemma \ref{lemmakpn} this sum is not zero for the given choice of $p \geq 2k+2$.
\end{proof}

\begin{proof}[Proof of Theorem \ref{teo3}]
Let us start with the case $k=2$. If $f = x^3$, its second derivative in directions $d$ and $e$ is $6dex + s$ for some constant $s$ and that is clearly a permutation polynomial. Now, consider the case $n = 2 + p$, the other case being obtained by taking the $p$-th power.

Thus, let $f = x^{2 + p}$. Then we have $\nabla_{d,e} f = 2de x^p + (2de^p + 2d^pe) x + s$ for some constant $s$. We know that a $p$-polynomial is permutation if and only if it has only one zero. Therefore $\nabla_{d,e} f$ is a permutation polynomial if and only if the equation $u^{p-1} + y^{p-1} + z^{p-1} = 0$ has no solution with $u,y,z \in \F_{p^2}^*$. Dividing by $z^{p-1}$ this is equivalent to requiring that the equation $u^{p-1}  + y^{p-1} + 1 = 0$ has no solution with $u,y \in \F_{p^2}^*$. Suppose now $(u,y)$ is such a solution and let $z = u^{p-1}$. Then $y^{p-1} = -1-z$. Since the $(p-1)$-th powers in $\F_{p^2}^*$ are precisely elements of norm $1$ we have $z^{p+1} = (-1-z)^{p+1} = 1$. From these equations we obtain $z^p + z = -1$. Hence $z$ has norm $1$ and trace $-1$. Its minimal polynomial is $z^2 + z + 1$, thus $z$ is a primitive third root of unity and we must have $p \equiv -1 \pmod 3$, since $z$ is not an element of the prime field. Conversely, if $p \equiv -1 \pmod 3$, let $z$ be a primitive third root of unity in $\F_{p^2}$. Since $p-1 \equiv 1 \pmod 3$ we have that $z^{p-1} = z$. Hence, taking $u = z$ and $y = z^p$ we obtain a solution of $u^{p-1} + y^{p-1} + 1=0$.\\

Now, let $k = 3$. If $f = x^4$ we have $\nabla_{c,d,e} f = 24cdex + s$ for some constant $s$ and this is a permutation polynomial. Now, let $n = 3 + p$. We have $\nabla_{c,d,e} x^n = 6cdex^p + (6c^pde + 6cd^pe + 6cde^p)x + s$ for some constant $s$. This is a permutation polynomial if and only if the equation $u^{p-1} + v^{p-1} + w^{p-1} + t^{p-1} = 0$ has no solutions in $\F_{p^2}^*$. But $-1$ has norm $1$, hence there is $z \in \F_{p^2}$ such that $z^{p-1} = -1$. Then $(1,1,z,z)$ is a solution of the previous equation.

The last case is $n = 2 + 2p$. We have $\nabla_{1,1,1} x^n = 12x^p + 12x + s$ for some constant $s$. But now $x^p + x$ is not a permutation polynomial of $\F_{p^2}$ since, as before, there exists $z$ with $z^{p-1} = -1$.\\
For $p = 5$ and $p = 7$ the conclusion of the theorem follows by direct computation.
\end{proof}

\begin{proof}[Proof of Theorem \ref{teo4}]
Let $f = x^n$ and $n = a+bp+cp^2+dp^3$. From Theorem \ref{teo1} we know that $n \equiv 3 \pmod{p-1}$. Hence we have that $a+b+c+d \in \{3, p+2, 2p+1, 3p \}$. We consider each case separately.\\

\textit{Case 1.}
Suppose $a+b+c+d = 3$. We can assume $a \neq 0$, thus we have, up to multiplying by some power of $p$, that $n \in \{3, 2+p, 2+p^2, 2+p^3, 1 + p + p^2 \}$.\\
Suppose $n = 2+p$. We have
$$\nabla_{u,v} f = 2uvx^p + (2uv^p + 2u^pv)x + s,$$
for some constant $s$. This is a permutation polynomial if and only if the equation $y^{p-1} + z^{p-1} + t^{p-1} = 0$ has no solutions in $\F_{p^4}^*$. We recall Weil's bound for the number of $\F_q$-rational projective points $N$ of a smooth curve in $\Proj^2(\F_q)$. We have
$$|N - q - 1| \leq 2g \sqrt{q},$$
where $g = \frac{(d-1)(d-2)}{2}$ is the genus of the curve and $d$ is the degree of the defining polynomial. In our case the lower bound reads $N \geq 5p^3 - 6p^2 + 1$. We are interested in solutions $(y,z,t)$, where $yzt \ne 0$. The equation $y^{p-1} + z^{p-1} = 0$ has $p-1$ (projective) solutions, hence in total we have to exclude $3(p-1)$ solutions from the number obtained before. But $5p^3 - 6p^2 + 1 > 3(p-1)$, therefore we always have solutions in $\F_{p^4}^*$.

Suppose $n = 2+p^3$. We have
$$\nabla_{u,v} f = 2uvx^{p^3} + (2uv^{p^3} + 2u^{p^3}v)x + s,$$
for some constant $s$. This is a permutation polynomial if and only if $y^{p^3-1} + z^{p^3-1} + t^{p^3-1} = 0$ has no solutions in $\F_{p^4}^*$. Since $\gcd(p^3-1,p^4-1) = p-1$ the number of solutions of this equation is the same as in the previous case, hence we can conclude as before.

Suppose $n = 2 + p^2$. We have
$$\nabla_{u,v} f = 2uvx^{p^2} + (2uv^{p^2} + 2u^{p^2}v)x + s,$$
for some constant $s$. This is a permutation polynomial if and only if $y^{p^2-1} + z^{p^2-1} + t^{p^2-1} = 0$ has no solution with $y,z,t \in \F_{p^4}^*$, if and only if $y^{p^2-1} + z^{p^2-1} + 1= 0$ has no solution with $y,z \in \F_{p^4}^*$. Suppose $(y,z)$ is a solution and let $w= y^{p^2-1}$. We have $w^{p^2+1} =1$. We also have $(-1-w)^{p^2+1} = 1$. Substituting the first equation in the last one we obtain $w^{p^2} + w + 1 = 0$. Therefore $w^{p^2} = -1 -z$ and this gives, using the first equation, $w(-1-w) = 1$, i.e. $w^2 + w + 1 = 0$. This is an equation of degree $2$ over $\F_p$, therefore $w \in \F_{p^2}$. But then $w^{p^2} = z$ and $w^{p^2} + w + 1 = 2w + 1$. This is zero if and only if $w = -1/2$. Since $w$ has norm $1$ we must have $2^4 \equiv 1 \pmod p$, therefore $p = 5$. But in $\F_5$ we have that $x^2+x+1$ is irreducible, therefore $w = -1/w \equiv 2 \pmod 5$ cannot satisfy $w^2 + w + 1 = 0$. Hence we do not have a solution and the polynomial is a permutation polynomial.

Suppose now $n = 1 + p + p^2$. We have
$$\nabla_{1,v} f = (v + v^p)x^{p^2} + (v + v^{p^2})x^p + (v^p + v^{p^2})x + s, $$
for some constant $s$. We will now find a solution $(v,x), vx \ne 0$, for the equation $\nabla_{1,v} f -s = 0$ proving that, as a function of $x$, it is not a permutation polynomial. We consider $\F_{p^4}$ as a vector space of dimension $2$ over $\F_{p^2}$ and take $(1,t)$ as basis, with $t^2 = m$, a non-square in $F_{p^2}$.  Let now $v = v_1 + v_2t$ and $x = x_1 + x_2 t$ the decomposition of $v$ and $x$ over our basis. Rewriting the previous equation we obtain the following system of two equations:

\begin{center}
$\begin{cases}
x_1 v_1 + x_1^pv_1+x_1v_1^p - mv_2x_2 = 0 \\
cv_1x_2^p + cx_1v_2^p = 0
\end{cases}$
\end{center}
where $c = t^{p-1} \in \F_{p^2}$.

Set now $v_1 = v_2 = 1$. We have $x_2^p = -x_1$, hence $x_2 = -x_1^p$. Putting it in the first equation we obtain

$$(1+m)x_1^p + 2x_1 = 0.$$
We know that this equation has a solution in $\F_{p^2}$ if and only if $2/(1+m)$ (as element of $\F_{p^2}$) has norm $1$ over $\F_p$. Since by Lemma \ref{nonquad} we can choose a non-square $m$ with this property, we solve the system, hence proving that $x^{1+p+p^2}$ is not $2$-PN over $\F_{p^4}$.\\

\textit{Case 2.}
Suppose now $n = a+bp+cp^2+dp^3$ and $a+b+c+d = p+2$.  We will show that $g   = (\nabla_{1,t}f)^{1+p+p^2+p^3}$ has degree $p^4-1$ for some direction $t$, hence it is not a permutation polynomial. We have $\nabla_{1,t}f = x^n - (x+1)^n - (x+t)^n + (x+1+t)^n$. Hence $g$ will consist of $4^4$ terms of the form
$$(x+i)^n (x+j)^{np} (x+k)^{np^2} (x+l)^{np^3},$$
where $i,j,k,l \in \{0,1,t,1+t \}$. Expanding the previous term using the binomial theorem we obtain
$$\sum_{(\alpha_i),(\beta_i),(\gamma_i), (\delta_i)} s x^{\alpha_1 + \delta_2 + \gamma_3 +  \beta_4 + (\beta_1 + \alpha_2 + \delta_3 + \gamma_4)p + ( \gamma_1 + \beta_2 + \alpha_3 + \delta_4)p^2 + (\delta_1 + \beta_2 + \gamma_3 + \alpha_4)p^3} I,$$
where $I = i^{e_i} j^{e_j} k^{e_k} l^{e_l},0 \leq \alpha_i \leq a, 0 \leq \beta_i \leq b, 0 \leq \gamma_i \leq c, 0 \leq \delta_i \leq d, e_i = a-\alpha_1 + (b-\beta_1)p + (c - \gamma_1)p^2 + (d - \delta_1)p^3, e_j = d - \delta_2 + (a - \alpha_2)p + (b - \beta_2)p^2 + (c-\gamma_2)p^3 , e_k = c - \gamma_3 + (d - \delta_3)p + (a - \alpha_3)p^2 + (b - \beta_3)p^3, e_l = b-\beta_4 + (c - \gamma_4)p +  (d-\delta_4)p^2 + (a - \alpha_4)p^3 $ and
$$s = \prod_{i=1}^{4} \binom{a}{\alpha_i} \prod_{i=1}^{4} \binom{b}{\beta_i} \prod_{i=1}^{4} \binom{c}{\gamma_i} \prod_{i=1}^{4} \binom{d}{\delta_i}.$$
Since $\alpha_1 + \delta_2 + \gamma_3 +  \beta_4 + (\beta_1 + \alpha_2 + \delta_3 + \gamma_4)p + (\gamma_1 + \beta_2 + \alpha_3 + \delta_4)p^2 + (\delta_1 + \beta_2 + \gamma_3 + \alpha_4)p^3 < 2(p^4-1)$, in order to compute the coefficient $M$ of degree $p^4-1$ we need to consider only the terms with $\alpha_1 + \delta_2 + \gamma_3 + \beta_4 = \beta_1 + \alpha_2 + \delta_3 + \gamma_4 = \gamma_1 +  \beta_2 + \alpha_3 + \delta_4 = \delta_1 + \beta_2 + \gamma_3 + \alpha_4 = p-1$.
Thus we have that $M = M_1 M_2 M_3 M_4$, where
$$M_1 = \sum_{\alpha_1 + \delta_2 + \gamma_3 + \beta_4 = p-1} \binom{a}{\alpha_1}\binom{b}{\beta_4}\binom{c}{\gamma_3}\binom{d}{\delta_2} i^{a- \alpha_1}j^{d - \delta_2} k^{c - \gamma_3} l^{b - \beta_4},$$
$$M_2 = \sum_{\alpha_2 + \delta_3 + \gamma_4 + \beta_1 = p-1} \binom{a}{\alpha_2}\binom{b}{\beta_1}\binom{c}{\gamma_4}\binom{d}{\delta_3} i^{p(b- \beta_1)}j^{p(a - \alpha_2)} k^{p(d - \delta_3)} l^{p(c - \gamma_4)},$$
$$M_3 = \sum_{\alpha_3 + \delta_4 + \gamma_1 + \beta_2 = p-1} \binom{a}{\alpha_3}\binom{b}{\beta_2}\binom{c}{\gamma_1}\binom{d}{\delta_4} i^{p^2(c- \gamma_1)}j^{p^2(b - \beta_2)} k^{p^2(a - \alpha_3)} l^{p^2(d - \delta_4)},$$
$$M_4 = \sum_{\alpha_4 + \delta_1 + \gamma_2 + \beta_3 = p-1} \binom{a}{\alpha_4}\binom{b}{\beta_3}\binom{c}{\gamma_2}\binom{d}{\delta_1} i^{p^3(d- \delta_1)}j^{p^3(c - \gamma_2)} k^{p^3(b - \beta_3)} l^{p^3(a - \alpha_4)}.$$

Consider $g = \nabla_{a_1,a_2} f$, where $a_i \in \F_{p^2}$. We have $g(\F_{p^2}) \subset \F_{p^2}$, hence $f$ has to be a $2$-PN monomial over $\F_{p^2}$. Hence we must have, up to multiplying $n$ by some power of $p$ and reducing modulo $x^{p^4} - x$,  $a+c = 2$ and $b+d = p$, $a+c = 1$ and $b+d = p+1$ or $a+c = 0$ and $b+d = p+2$. Except for the last case we can suppose without loss of generality that $a \neq 0$. We then have four possibilities for $n$: $n = bp + (p+2-b)p^3,  3 \leq b \leq p-1$, $n = 1 + bp + (p+1-b)p^3,  2 \leq b \leq p-1$,  $n = 2 + bp + (p-b)p^3,  1 \leq b \leq p-1$ and $n = 1+bp + p^2 + (p-b)p^3, 1 \leq b \leq p-1$. We consider each of them in order. \\

Let $n = bp + (p+2-b)p^3$. We use the previous formula with $a = 0, c=0$ and $d = p+2-b$ and we compute $M$. Let
$S(i,j) = \binom{b}{3}(i-j)^3$. Then we have $M_1 = S(l,j), M_2 = S(i,k)^p, M_3 = S(j,l)^{p^2}$ and $M_4 = S(k,i)^{p^3}$. Therefore we have
$$ M = \binom{b}{3}^4 (i-k)^6 (j-l)^6.$$ We need now to sum all these terms and we obtain that the coefficient of degree $p^4-1$ of $\nabla_{1,1} f$ is $14400\binom{b}{3}^4$. 
For $p \geq 7$ this coefficient is not zero and we are done.\\

Suppose now $n = 1 + bp + (p+1-b)p^3$. Now we use the previous formula with $a = 1, c=0$ and $d = p+1-b$ and we compute $M$. Let
$$S(i,j,k,l) = \binom{b}{3}l^3 - (b-1)\binom{b}{2}l^2j + b\binom{b}{2}lj^2 - \binom{b+1}{3}j^3 + \binom{b}{2} i (l-j)^2.$$
Then we have $M_1 = S(i,j,k,l), M_2 = S(j,k,l,i)^p, M_3 = S(k,l,i,j)^{p^2}$ and $M_4 = S(l,i,j,k)^{p^3}$. Summing all these terms, a computer computation shows that the coefficient of degree $p^4-1$ of $\nabla_{1,t} f$ is $$r_1 = \frac{4}{9}b^4(b-2)(b+1)(b-1)^4(25b^2-25b-59)$$ when $t=1$ and 
$$r_2 = \frac{8}{9}b^4(b-1)^4(1250b^4-2500b^3-4362b^2+5612b+5981)$$ when $t=2$.
We have $r_2(2) = r_2(-1) = -3456$, which is not zero since $p \geq 5$. This implies that we can exclude the cases $b = 2$ and $b = p-1$, because they produce polynomials that are not $2$-PN. Suppose now $3 \leq b \leq p-2$. We will show that $p_1 = 25x^2-25x-59$ and $p_2 = 1250x^4-2500x^3-4362x^2+5612x+5981$ cannot have a common root $b$ in the prime field $\F_p$.  Suppose that $p_1(b) \equiv p_2(b) \equiv 0 \pmod p$. Suppose $p \neq 5$ and consider $p_3 = p_2 - 50x^2p_1+50xp_1 = -2662 x^2+2662x+5981$. We must have $p_3(b) \equiv 0 \pmod p$. Suppose $p \neq 11$. Considering $2662p_1 + 25p_3$ we obtain that $p$ must divide $-7533 = - 3^5 \cdot 31$. A computer computation shows that $p_1$ and $p_2$ are coprime if $p \in \{5,11 \}$. When $p = 31$ their greatest common divisor is $x^2 + 30x +b$, which is irreducible over $\F_{31}$, hence it has no roots in that field. Putting all together we see that, with this choice of $n$, $f$ cannot be $2$-PN.\\

Let now $n = 2 + bp + (p-b)p^3,  1 \leq b \leq p-1$. As before, let
$$S(i,j,k,l) = \binom{b}{3}l^3 - b \binom{b}{2}l^2j+ b \binom{b+1}{2}lj^2 - \binom{b+2}{3}j^3 + $$
$$+ 2i\left(\binom{b}{2}l^2 - b^2 lj + \binom{b+1}{2}j^2 \right) + i^2b(l-j).$$
We have $c_1 = S(i,j,k,l), c_2 = S(j,k,l,i)^p, c_3 = S(k,l,i,j)^{p^2}$ and $c_4 = S(l,i,j,k)^{p^3}$. As in the previous case a computer computation shows that
the coefficient of degree $p^4-1$ of $(\nabla_{1,t} f)^{1+p+p^2+p^3}$ is $$r_1 = \frac{4}{9}b^4(b-1)(b-2)(b+2)(b+1)(25b^4-197b^2+100)$$ when $t = 1$ and
$$r_2 = \frac{16}{9}b^4(625b^8-8518b^6+31641b^4-32452b^2+10648)$$ when $t=2$. We have $r_2(1) = r_2(-1) = 3456$ and $r_2(2) = r_2(-2) = 55296$, which are not zero since $p \geq 5$. With the Euclidean algorithm we see that $25x^4-197x^2+100$ and $625x^8-8518x^6+31641x^4-32452x^2+10648$ are coprime modulo $p$, unless $ p \in \{19, 156797 \}$. If $p = 19$ their greatest common divisor is $x^2+5$, which is irreducible over $\F_{19}$. If $p = 156797$ the greatest common divisor is $x^2
 + 79228$ and this is irreducible over $\F_{156797}$. Hence $x^n$ cannot be $2$-PN with this choice of $n$.\\
 
Finally, let $n = 1+bp + p^3 + (p-b)p^3, 1 \leq b \leq p-1$. Consider
$$S(i,j,k,l) = \binom{b}{3}l^3 - b\binom{b}{2}l^2j + b\binom{b+1}{2}lj^2 - \binom{b+2}{3}j^3 + $$
$$ + (i+k)\left(\binom{b}{2}l^2 - b^2 lj + \binom{b+1}{2} j^2 \right) + ikb(l-j).$$
We have $M_1 = S(i,j,k,l), M_2 = S(j,k,l,i)^p, M_3 = S(k,l,i,j)^{p^2}$ and $M_4 = S(l,i,j,k)^{p^3}$. As in the previous cases a computer computation shows that
the coefficient of degree $p^4-1$ of $(\nabla_{1,t} f)^{1+p+p^2+p^3}$ is $$r_1 = \frac{4}{9}b^4(b-1)^2(b+1)^2(25b^4-2b^2+49)$$ when $t=1$ and
$$r_2 = \frac{16}{9}b^4(625b^8-1138b^6+2238b^4-1834b^2+2053)$$ when $t = 2$. We have $r_2(1) = r_2(-1) = 3456$, which is not zero since $p \geq 5$. With the Euclidean algorithm we see that $625x^8-1138x^6+2238x^4-1834x^2+2053$ and $25x^4 - 2x^2 + 49$ are coprime modulo $p$, unless $p = 12497$. In this case the greatest common divisor is $x^2 + 9356$ and this is irreducible over $\F_{12497}$. Therefore $x^n$ cannot be $2$-PN with this choice of $n$.\\

\textit{Case 3.}
Let $n = a+bp+cp^2+dp^3$ with $a+b+c+d = 2p+1$.  We will show that $g   = (\nabla_{1,1}f)^{1+p^2}$ has degree $p^4-1$, hence it is not a permutation polynomial.
As before we use the binomial theorem to expand $g$, obtaining terms of the form
$$(x+i)^n(x+j)^{np^2} = \sum_{(\alpha_i),(\beta_i),(\gamma_i), (\delta_i)} s x^{\alpha_1 + \gamma_2 + (\beta_1 + \delta_2)p + ( \gamma_1 + \alpha_2)p^2 + (\delta_1 + \beta_2 )p^3} i^{e_i} j^{e_j},$$
where $0 \leq \alpha_i \leq a, 0 \leq \beta_i \leq b, 0 \leq \gamma_i \leq c, 0 \leq \delta_i \leq d, e_i = a-\alpha_1 + (b-\beta_1)p + (c - \gamma_1)p^2 + (d - \delta_1)p^3, e_j = c - \gamma_2 + (d - \delta_2)p + (a - \alpha_2)p^2 + (b-\beta_2)p^3$ and
$$s = \prod_{i=1}^{2} \binom{a}{\alpha_i} \prod_{i=1}^{2} \binom{b}{\beta_i} \prod_{i=1}^{2} \binom{c}{\gamma_i} \prod_{i=1}^{2} \binom{d}{\delta_i}.$$
Since $\alpha_1 + \gamma_2 + (\beta_1 + \delta_2)p + ( \gamma_1 + \alpha_2)p^2 + (\delta_1 + \beta_2 )p^3 < 2(p^4-1)$, in order to compute the coefficient $M$ of degree $p^4-1$ we need to consider only the terms with $\alpha_1 + \gamma_2 = \beta_1 + \delta_2 = \gamma_1 + \alpha_2 = \delta_1 + \beta_2 = p-1$.
Thus we have that $M = M_1 M_2 M_3 M_4$, where
$$M_1 = \sum_{\alpha_1 + \gamma_2 = p-1} \binom{a}{\alpha_1}\binom{c}{\gamma_2} i^{a- \alpha_1}j^{c - \gamma_2},$$
$$M_2 = \sum_{\beta_1 + \delta_2 = p-1} \binom{b}{\beta_1} \binom{d}{\delta_2} i^{p(b- \beta_1)}j^{p(d - \delta_2)},$$
$$M_3 = \sum_{\alpha_2 + \gamma_1 = p-1} \binom{a}{\alpha_2} \binom{c}{\gamma_1} i^{p^2(c- \gamma_1)}j^{p^2(a - \alpha_2)},$$
$$M_4 = \sum_{\delta_1 + \beta_2 = p-1} \binom{b}{\beta_2} \binom{d}{\delta_1} i^{p^3(d- \delta_1)}j^{p^3(b - \beta_2)}.$$

As before we reduce $n \pmod{p^2-1}$ since $f$ should be $2$-PN over the subfield $\F_{p^2}$. Let $a+c = k$ and $b+d = 2p+1-k$. If $3 \leq k \leq p-2$ we have that $n \equiv k+1 + (p+1-k)p \pmod{p^2-1}$ and, with this choice of $n$, $f$ is not $2$-PN. If $p+3 \leq k \leq 2p-2$ we have $n \equiv (k-p) + (2p+2-k)p \pmod{p^2-1}$ and we conclude as before. We have four cases left, i.e. $k \in \{p-1, p, p+1, p+2 \}$. Without loss of generality we consider only $k = p-1$ and $k = p$, the other two being obtained considering $f^{np^2} \pmod{x^{p^4} - x}$.\\
Suppose $a+c = p-1$ and $b+d = p+2$. We see immediately that $M_1 = M_3 = 1$. Then we have that $M_2$ would be equal to
$$\binom{b}{3}i^3 + \binom{b}{2}(p+2-b)i^2j + b \binom{p+2-b}{2}ij^2 + \binom{p+2-b}{3}j^3 \equiv \binom{b}{3}(i-j)^3.$$
Exchanging $i$ and $j$ we obtain $M_4$, which is then equal to $-\binom{b}{3}(i-j)^3 \pmod p$. Hence we obtain $M \equiv -\binom{b}{3}^2 (i-j)^6$. Summing up all the terms in order to obtain the coefficient of degree $p^4-1$ of  $(\nabla_{1,1}f)^{1+p^2}$ we notice that this equals $-\binom{b}{3}^2 S(2,6)$ where $S(k,r)$ is the sum we studied in Lemma \ref{lemmakpn}. Finally we obtain $M \equiv -120\binom{b}{3}^2 \pmod p$. This is nonzero for $p \ge 7$.\\
Now, suppose $a+c = p$ and $b+d = p+1$. Then we have $M_1 = ai + cj \equiv a(i-j) \pmod p$ and $M_3 \equiv -a(i-j) \pmod p$. Expanding $M_2$ we obtain
$$\binom{b}{2}i^2 + b (p+1-b)ij + \binom{p+1-b}{2}j^2 \equiv \binom{b}{2} (i-j)^2 \pmod p.$$
Exchanging $i$ and $j$ we obtain that $M_4 \equiv M_2 \pmod p$. Now $$M = M_1M_2M_3M_4 \equiv -a^2\binom{b}{2}^2 (i-j)^6.$$ As before, considering all the terms, we have that the coefficient of degree $p^4-1$ of  $(\nabla_{1,1}f)^{1+p^2}$ is $-a^2\binom{b}{2}^2 S(2,6) = -120a^2\binom{b}{2}^2$, which is not zero when $p \geq 7$.\\

\textit{Case 4.}
Suppose $a+b+c+d = 3p$. Let $a+c =k$ and $b+d = 3p-k$. We have $p+2 \leq k \leq 2p-2$. Reducing modulo $p^2-1$ we obtain $n \equiv k+1-p + (2p-k+1)p \pmod{p^2-1}$ and, with this choice of $n$, $f$ is not $2$-PN over $\F_{p^2}$.\\

A computer computation shows that the same conclusion of the theorem holds for $p=5$.
\end{proof}

\begin{proof}[Proof of Theorem \ref{teo5}]
Let $f = x^n$ and $n = a+bp+cp^2+dp^3$. We know that $n \equiv 4 \pmod{p-1}$. Hence we have that $a+b+c+d \in \{4, p+3, 2p+2, 3p+1 \}$. We consider each case separately.\\

\textit{Case 1.}
Suppose $a+b+c+d = 4$. Up to multiplying by some power of $p$ and reducing modulo $p^4-1$, we have $n \in \{4,3+p,3+p^2,3+p^3,2+p+p^2,2+p+p^3,2+p^2+p^3,2+2p,2+2p^2,1+p+p^2+p^3 \}$. Reducing modulo $p^2-1$ we can exclude, according to Proposition \ref{teo3}, all these cases except for $n = 4$, $n = 3+p^2$ and $n = 2+2p^2$.\\
Let $n = 3+p^2$. Then we have
$$\nabla_{1,u,v} f = 6uvx^{p^2} + x(6uv + 6uv^{p^2} + 6u^{p^2}v)+ s,$$
for some constant $s$. This is a permutation polynomial if and only if the equation $y^{p^2-1} + z^{p^2-1} + t^{p^2-1} + 1 = 0$ has no solutions with $y,z,t \in \F_{p^4}^*$. We note that elements of the form $y^{p^2-1}$ have norm $1$. In $\F_{p^4}$ we have that $-1$ has norm $1$, hence we can take $w$ such that $w^{p^2-1} = -1$ and $(w,w,1)$ is a solution of the previous equation.\\
Suppose now $n = 2+2p^2$. We have
$$\nabla_{1,1,1} f = 12x^{p^2} + 12x + 36$$
and $x^{p^2} + x$ is not a permutation polynomial of $\F_{p^4}$.\\

\textit{Case 2.}
Let $n = a+bp+cp^2+dp^3$ with  $a+b+c+d = p+3$.  We will show that $g   = (\nabla_{1,1,t}f)^{1+p+p^2+p^3}$ has degree $p^4-1$ for some direction $t$, hence it is not a permutation polynomial. We have $$\nabla_{1,1,t}f = - x^n + (x+t)^n - 2(x+1+t)^n + (x+2+t)^n + 2(x+1)^n - (x+2)^n.$$ Thus $g$ will consist of $6^4$ terms of the form
$$(x+i)^n (x+j)^{np} (x+k)^{np^2} (x+l)^{np^3},$$
where $i,j,k,l \in \{0,t,1+t,2+t,1,2 \}$. We will use the same formulae seen in the proof of Theorem \ref{teo4} in order to compute the coefficient of degree $p^4-1$ of these terms.\\
Consider $g = \nabla_{a_1,a_2,a_3} f$, where $a_i \in \F_{p^2}$. We have $g(\F_{p^2}) \subset \F_{p^2}$, hence $f$ has to be a $3$-PN monomial over $\F_{p^2}$. Hence we must have, up to multiplying $n$ by some power of $p$ and reducing modulo $x^{p^4} - x$,  $a+c = 3$ and $b+d = p$. We suppose without loss of generality that $a \neq 0$. We then have three possibilities for $n$: $n = 3 + bp + (p-b)p^3,  1 \leq b \leq p-1$,  $n = 2 + bp + p^2 + (p-b)p^3,  1 \leq b \leq p-1$ and $n = 1+bp + 2p^2 + (p-b)p^3, 1 \leq b \leq p-1$. We note that the last two are the same modulo multiplying by $p^2$, hence we will consider only one of them.\\
Thus, suppose $n = 3 + bp + (p-b)p^3$. We use the formulae of the previous proof with $a = 3, c=0$ and $d = p-b$ and we compute $M$. Let
$$S(i,j,k,l) = i^3b(l-j)+3i^2 \left(\binom{b}{2}l^2-b^2 lj+\binom{b+1}{2} j^2 \right) + $$ 
$$ + 3i \left(\binom{b}{3} l^3 - b\binom{b}{2}l^2j + b\binom{b+1}{2} lj^2- \binom{b+2}{3}j^3 \right)+ $$ $$ + \binom{b}{4}l^4-b\binom{b}{3}l^3j+\binom{b}{2}\binom{b+1}{2}l^2j^2-b\binom{b+2}{3}lj^3+\binom{b+3}{4}j^4.$$
Then we have $M_1 = S(i,j,k,l), M_2 = S(j,k,l,i)^p, M_3 = S(k,l,i,j)^{p^2}$ and $M_4 = S(l,i,j,k)^{p^3}$. Summing all these terms, a computer computation shows that the coefficient of degree $p^4-1$ of $(\nabla_{1,1,t} f)^{1+p+p^2+p^3}$ is \small $$ r_1 = \frac{1}{4}b^4(1225b^{12}-63280b^{10}+798090b^8-3115120b^6+5525413b^4-5086440b^2+2479248)$$ \normalsize when $t=1$ and \small $$r_2 = 16b^4(1225b^{12}-66745b^{10}+868335b^8-3306955b^6+5775712b^4-5057460b^2+2542752)$$ \normalsize when $t = 2$. With the Euclidean algorithm we see that the two polynomials $h_1$ and $h_2$ of degree $12$ in $r_1$ and $r_2$ are coprime unless $$p \in \{5,7,17,233,239,937,28933,323339 \}.$$  If $p \in \{5,7,239,28933 \}$ the greatest common divisor is irreducible over $\F_p$, therefore it is always nonzero when $b \in \F_p$. Now, if $p = 17$ we have $\gcd(h_1,h_2) = b^2 + 9$ which has roots $\pm 5$. If $p = 233$ then $\gcd(h_1,h_2) = b^2+229 = (b+2)(b-2)$. If $p = 937$ then $\gcd(h_1,h_2) = (b+533)(b+404)$. If $p = 323339$ then $\gcd(h_1,h_2) = (b+9299)(b+314040)$. A direct computer computation, considering different directions, shows that in these cases $\nabla_{1,1,t} f$ is not a permutation polynomial.\\

Suppose now $n = 2 + bp + p^2 + (p-b)p^3$. Let
$$S(i,j,k,l) = i^2kb(l-j)+(i^2 + 2ik) \left(\binom{b}{2}l^2-b^2 lj+\binom{b+1}{2} j^2 \right) + $$ 
$$ + (k+ 2i) \left(\binom{b}{3} l^3 - b\binom{b}{2}l^2j + b\binom{b+1}{2} lj^2- \binom{b+2}{3}j^3 \right)+ $$ $$ + \binom{b}{4}l^4-b\binom{b}{3}l^3j+\binom{b}{2}\binom{b+1}{2}l^2j^2-b\binom{b+2}{3}lj^3+\binom{b+3}{4}j^4.$$
Then we have $M_1 = S(i,j,k,l), M_2 = S(j,k,l,i)^p, M_3 = S(k,l,i,j)^{p^2}$ and $M_4 = S(l,i,j,k)^{p^3}$. Summing all these terms, a computer computation shows that the coefficient of degree $p^4-1$ of $(\nabla_{1,1,t} f)^{1+p+p^2+p^3}$ is 
$$r_1 = \frac{1}{4}b^4(1225b^{12}-9380b^{10}+47270b^8-80972b^6+9881b^4-368744b^2+939856)$$
when $t=1$ and
$$r_2 = 16b^4(1225b^{12}-9170b^{10}+44330b^8-80678b^6+62969b^4-271556b^2+1009744)$$
when $t=2$. With the Euclidean algorithm we see that the two polynomials $h_1$ and $h_2$ of degree $12$ in $r_1$ and $r_2$ are coprime unless
$$p \in \{5,7,19,29,101,41051, 15052321 \}.$$
If $p = 5$ or $p = 7$ the greatest common divisor of $h_1$ and $h_2$ has no roots in $\F_p$, hence for every choice of $b$ it cannot vanish. For $p = 19$ we have $\gcd(h_1,h_2) = (b+3)(b+16)$. For $p = 29$ we have $\gcd(h_1,h_2) = (b+27)(b+2)$. For $p = 101$ we have $\gcd(h_1,h_2) = (b+34)(b+67)$. For $p = 41051$ we have $\gcd(h_1,h_2) = (b+17388)(b+23663)$. For $p = 15052321$ we have $\gcd(h_1,h_2) = (b + 3670586)(b + 11381735)$. As before a direct computation shows that we can exclude also these cases, since $x^n$, with such choice of $n$, is not $3$-PN over $\F_{p^4}$.\\

\textit{Case 3.}
Suppose $n = a+bp+cp^2+dp^3$ with $a+b+c+d = 2p+2$.  We will show that $g   = (\nabla_{1,1,1}f)^{1+p^2}$ has degree $p^4-1$, hence it is not a permutation polynomial. As before we reduce $n$ modulo $p^2-1$ and exclude some cases. Suppose $a+c = k$ and $b+d = 2p+2-k$. If $4 \leq k \leq p-2$ then $n \equiv k+1 + (p+2-k)p \pmod{p^2-1}$
and $f$ is not $3$-PN. If $k = p$ we obtain $n \equiv 1 + 3p \pmod{p^2-1}$ and we exclude this value. If $k = p+1$ we have $n \equiv 2 + 2p \pmod{p^2-1}$ and also this case is not $3$-PN. If $p+4 \leq k \leq 2p-2$ we have $n \equiv k-p + (2p+3-k)p$ and also this case is bad. We have, only two cases left, namely $a+c = p-1$ and $a+c = p+3$. We can suppose without loss of generality that $a+c = p-1$. Using the formulae of Theorem \ref{teo4} we conclude that $M_1 = M_3 = 1$ and
\begin{align*}
M_2 & =  \binom{b}{4}i^4 + \binom{b}{3}(p+3-b)i^3j + \binom{b}{2}\binom{p+3-b}{2}i^2j^2 + \\
&  + b \binom{p+3-b}{3}ij^3 + \binom{p+3-b}{4}j^4 \equiv \binom{b}{4}(i-j)^4,
\end{align*}
where the equivalence is modulo $p$. Exchanging $i$ and $j$ we obtain $M_4$ and thus, as before, the coefficient of degree $p^4-1$ of $(\nabla_{1,1,1}f)^{1+p^2}$ is 
$$\binom{b}{4}^2 S(3,8) = -10080 \binom{b}{4}^2,$$
which is not zero for $p \geq 11$.\\

\textit{Case 4.}
Suppose $a+b+c+d = 3p+1$. Let $a+c =k$ and $b+d = 3p+1-k$. We have $p+3 \leq k \leq 2p-2$. Reducing modulo $p^2-1$ we obtain $n \equiv k+1-p + (2p-k+2)p \pmod{p^2-1}$ and, with this choice of $n$, $f$ is not $3$-PN over $\F_{p^2}$.\\

A computer computation shows that the same conclusion of the theorem holds for $p=5,7$.
\end{proof}

\bibliographystyle{alpha}

\def\cprime{$'$}

\end{document}